\tikzset{square matrix/.style={
    matrix of nodes,
    column sep=-\pgflinewidth, row sep=-\pgflinewidth,
    nodes={draw,
      minimum height=4.5pt,
      anchor=center,
      text width=4.5pt,
      align=center,
      inner sep=0pt
    },
  },
  square matrix/.default=1.2cm
}
\newtheorem{thm}{Theorem}
\newtheorem{obs}{Observation}
\newtheorem{defn}{Definition}
\newtheorem{cor}{Corollary}
\newtheorem{conj}{Conjecture}
\newtheorem{prop}{Proposition}
\newtheorem{ques}{Question}
\begin{document}

\title{Upper $k$-tuple total domination in graphs}

\author{Adel P. Kazemi\\[1em]
Department of Mathematics\\ University of Mohaghegh Ardabili \\ P.O.\ Box 5619911367, Ardabil, Iran. \\
Email: adelpkazemi@yahoo.com \\[1em]
}

\maketitle

\begin{abstract}
Let $G=(V,E)$ be a simple graph. For any integer $k\geq 1$, a subset
of $V$ is called a $k$-tuple total dominating set of $G$ if every
vertex in $V$ has at least $k$ neighbors in the set. The minimum
cardinality of a minimal $k$-tuple total dominating set of $G$ is
called the $k$-tuple total domination number of $G$. In this paper,
we introduce the concept of upper $k$-tuple total domination number
of $G$ as the maximum cardinality of a minimal $k$-tuple total
dominating set of $G$, and study the problem of finding a minimal
$k$-tuple total dominating set of maximum cardinality on several
classes of graphs, as well as finding general bounds and
characterizations. Also, we find some results on the upper $k$-tuple
total domination number of the Cartesian and cross product graphs.
\\[0.2em]

\noindent
Keywords: $k$-tuple total domination number, upper $k$-tuple total domination number,
Cartesian and cross product graphs, hypergraph, (upper) $k$-transversal number.
\\[0.2em]

\noindent
MSC(2010): 05C69.
\end{abstract}

\pagestyle{myheadings}
\markboth{\centerline {\scriptsize  A. P. Kazemi}}     {\centerline {\scriptsize A. P. Kazemi,~~~~~~~~~~~~~~~~~~~~~~~~~~~~~~~~~~~~~~~~~~~~~~~~~~~~~~~~~~~~~~~~~~~~~~~~      Upper $k$-tuple total domination in graphs}}


\section {\bf Introduction}

All graphs considered here are finite, undirected and simple. For
standard graph theory terminology not given here we refer to
\cite{West}. Let $G=(V,E) $ be a graph with the \emph{vertex set}
$V$ of \emph{order} $n(G)$ and the \emph{edge set} $E$ of
\emph{size} $m(G)$. The \emph{open neighborhood} of a vertex $v\in
V$ is $N_{G}(v)=\{u\in V\ |\ uv\in E\}$, while its cardinality is
the \emph{degree} of $v$ and denoted by $deg_G(v)$. The \emph{closed
neighborhood} of a vertex $v\in V$ is also $N[v]=N_{G}(v)\cup\{v\}$.
The \emph{minimum} and \emph{maximum degree} of $G$ are denoted by
$\delta =\delta (G)$ and $\Delta =\Delta (G)$, respectively. We
write $K_n$, $C_{n}$ and $P_{n}$ for a \emph{complete graph}, a
\emph{cycle}, and a \emph{path} of order $n$, respectively, while
$K_{n_1,...,n_p}$ denotes a \emph{complete $p$-partite graph}. Also for a subset $S\subseteq V$, $G[S]$ denotes the \emph{induced subgraph} of $G$ by $S$ in which $V(G[S])=S$ and for any two vertices $x,y\in S$, $xy\in E(G[S])$ if and only if $xy\in E(G)$.
\vspace{0.2 cm}


\begin{defn}
\label{kOPN} \emph{Let $k\geq 1$ be an integer and let $v\in
S\subseteq V$. A vertex $v'$ is called a }$k$-open private neighbor
\emph{of $v$ with respect to $S$, or simply a ($S,k$)}-opn \emph{of $v$ if
$v\in N_{G}(v')$ and $| N_{G}(v')\cap S| =k$, in other words, there
exists a $k$-subset $S_{v}\subseteq S$ containing $v$ such that
$N_{G}(v')\cap S=S_{v}$. The set
\[
opn_{k}(v;S)=\{v'\in V| \mbox{$v'$ is a ($S,k$)-opn of $v$}\}
\]
is called the } $k$-open private neighborhood set \emph{of $v$ with
respect to $S$. Also, a $k$-open private neighbor of $v$ with respect
to $S$ is called} external \emph{or} inner \emph{if the vertex is in $V-S$ or $S$, respectively.} 
\end{defn}


\textbf{Hypergraphs.} Hypergraphs are systems of sets which are conceived as natural
extensions of graphs. A \emph{hypergraph} $H=(V,E)$ is a set $V$ of
elements, called \emph{vertices}, together with a multiset $E$ of
arbitrary subsets of $V$, called \emph{edges}. For integer $k\geq
1$, a $k$-\emph{uniform hypergraph} is a hypergraph in which every
edge has size~$k$. Every simple graph is a $2$-uniform hypergraph.
For a graph $G=(V,E)$, $H_{G}=(V,C)$ denotes the \emph{open
neighborhood hypergraph} of $G$ with the vertex set $V$ and edge set
$C$ consisting of the open neighborhoods of vertices of $V$ in $G$.
\vspace{0.2 cm}

A \emph{transversal} in a hypergraph $H=(V,E)$ is a subset
$S\subseteq V$ such that $|S\cap e|\geq 1$ for every edge $e\in E$;
that is, the set $S$ meets every edge in $H$. The \emph{transversal
number} $\tau (H)$ of $H$ is the minimum size of a transversal in
$H$. In a natural way, Wanless et al. generalized the concept of transversal in a Latin square to $k$-transversal \cite{WCS}.

\begin{defn}
\emph{\cite{WCS} For any positive integer $k$, a }$k$-transversal \emph{or a} $k$-plex \emph{in a Latin square of order $n$ is a set of $nk$ cells, $k$ from each row, $k$ from each column, in which every symbol occurs exactly $k$ times. The maximum number of disjoint $k$-transversals in a Latin square $L$ is called its} $k$-transversal number \emph{and denoted by
$\tau_{k}(L)$. Obviously $\tau_{k}(L)\leq n/k$. A Latin squre $L$ has} a decomposition into disjoint $k$-transversals \emph{means $\tau_{k}(L)=n/k$}.
\end{defn}

In a similar way, we generalize the concept of transversal in a hypergraph to $k$-transversal as following: 

\begin{defn}
\emph{For any integer $k\geq 1$, a} $k$-transversal
\emph{in a hypergraph $H=(V,E)$ is a subset $S\subseteq V$ such that
$|S\cap e|\geq k$ for every edge $e\in E$; that is, every edge in
$H$ contains at least~$k$ vertices from the set $S$. The} $k$-transversal number $\tau _{k}(H)$ \emph{of $H$ is the minimum cardinality of a minimal $k$-transversal in $H$, while
the} upper $k$-transversal number $\Upsilon _{k}(H)$ \emph{of $H$ is defined as the
maximum cardinality of a minimal $k$-transversal in $H$}.
\end{defn}


\textbf{Domination.} Domination in graphs is now well studied in graph theory and the
literature on this subject has been surveyed and detailed in the two
books by Haynes, Hedetniemi, and Slater~\cite{hhs1, hhs2}. A set
$S\subseteq V$ is a \emph{dominating set} (resp. \emph{total
dominating set}) of $G$ if each vertex in $V\setminus S$ (resp. $V$)
is adjacent to at least one vertex of $S$. The \emph{domination
number} $\gamma (G)$ (resp. \emph{total domination number} $\gamma
_{t}(G)$) of $G$ is the minimum cardinality of a dominating set
(resp. total dominating set) of $G$. An extension of
total domination number introduced by Henning and Kazemi in \cite{HeKa09} (the reader can study \cite{HK,KazInflated,KazTOC,KazBeh,KPS} for more information).

\begin{defn}\label{kTDN}
\emph{\cite{HeKa09} Let $k\geq 1$ be an integer and let $G$ be a graph
with $\delta(G)\geq k$. A subset $S\subseteq V$ is called a} $k$-tuple
total dominating set, \emph{briefly $k$TDS, of $G$ if for each $x\in
V$, $| N(x)\cap S| \geq k$. The minimum number of vertices of a $k$TDS
of $G$ is called the }$k$-tuple total domination number \emph{of $G$
and denoted by $\gamma _{\times k,t}(G)$. A $k$TDS with cardinality
$\gamma _{\times k,t}(G)$ is called a} min-TDS \emph{of $G$}.
\end{defn}

Finding the maximum cardinality of the set of minimal subsets of the vertices (or edges or both) of a graph with a property is one of the important problems in graph theory. According to this fact, in this paper, we initiate the study of the problem of finding a minimal $k$-tuple total dominating set of maximum cardinality in a graph. This leads to our next definition.

\begin{defn}
\emph{The} upper $k$-tuple total domination number $\Gamma _{\times
k,t}(G)$ \emph{of $G$ is the maximum cardinality of a minimal $k$TDS
of $G$, and a minimal $k$TDS with cardinality
$\Gamma _{\times k,t}(G)$ is a} $\Gamma _{\times k,t}(G)$-set,
\emph{ or a} $\Gamma _{\times k,t}$-set of $G$. \emph{Also, we say that a graph $G$ is a} $\Gamma _{\times
k,t}$-external graph \emph{if it has a $\Gamma _{\times k,t}$-set
$S$ such that every vertex in it has an external $k$-open private
neighbor with respect to $S$}.
\end{defn}

Obviously, for every graph $G$ and every positive integer $k$, $%
\gamma _{\times k,t}(G)\leq \Gamma _{\times k,t}(G)$, and this bound
is sharp by $\gamma _{\times
k,t}(K_{n})=\Gamma _{\times k,t}(K_{n})=k+1$ when $1\leq k<n$. We remark that the
upper $1$-tuple total domination number $\Gamma _{\times 1,t}(G)$ is
the well-studied \emph{upper total domination number} $\Gamma
_{t}(G)$, while the upper $2$-tuple total domination number is known
as the \emph{upper double total domination number}. The redundancy
involved in upper $k$-tuple total domination makes it useful in many
applications.
\vspace{0.2 cm}

In this paper, as we said before, we initiate the study of the problem of finding a minimal
$k$-tuple total dominating set of maximum cardinality on several
classes of graphs, as well as finding general bounds and
characterizations. Also we present a Vizing-like conjecture on the upper
$k$-tuple total domination number, and prove it for a family of graphs. Proving 
\[
\Gamma
_{\times k\ell ,t}(G\times H)\geq \Gamma _{\times k,t}(G) \cdot  \Gamma
_{\times \ell ,t}(H)~ \mbox{  (for any } k,\ell\geq 1)
\]
is our next work in which $G\times H$ denotes the cross product of two graphs $G$ and $H$. Then we characterize graphs $G$ satisfying $\Gamma _{\times
k,t}(G)=\gamma _{\times k,t}(G)$, and show that for any graph $G$
with minimum degree at least $k$,\\
\verb"1." $\Gamma _{\times k,t}(G)=\Upsilon _{k}(H_{G})$, and \\
\verb"2." $\Gamma _{\times k,t}(G)=\gamma _{\times k,t}(G)$ if and only if $\Upsilon
_{k}(H_{G})=\tau _{k}(H_{G})$.
\vspace{0.2 cm}


We begin our discussion with the following useful observation. 
\begin{obs}
\label{obs} Let $k\geq 1$ be an integer and let $G$ be a graph of order $n$ with $\delta
(G)\geq k$. Then

\verb"i." $\gamma _{\times k,t}(G) \leq \Gamma _{\times k,t}(G)\leq n$,

\verb"ii." every $k$TDS $S$ of $G$ is minimal if and only if $opn_{k}(v;S)\neq \emptyset$ for every
vertex $v\in S$,

\verb"iii." all neighbors of every vertex of degree $k$ in $G$ belong to
every $k$TDS of $G,$ and

\verb"iv." if $H$ is a spanning subgraph of $G$ which has a
$\Gamma_{\times k,t}$-set that is also a minimal $k$TDS of $G$, then
$\Gamma _{\times k,t}(H)\leq \Gamma _{\times k,t}(G)$.
\end{obs}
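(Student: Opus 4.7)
The plan is to dispatch the four clauses in order, since three of them follow essentially from unwinding the definitions, while (ii) is the only part that calls for a genuine (though short) argument.

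For part (i), I would first note that $V$ itself is a $k$TDS of $G$: every vertex has degree at least $k$ by hypothesis, so every vertex has at least $k$ neighbors in $V$. Hence minimal $k$TDSs exist, and every one of them has size at most $n$; this gives the upper bound. The inequality $\gamma_{\times k,t}(G) \leq \Gamma_{\times k,t}(G)$ is immediate from the fact that $\gamma_{\times k,t}(G)$ and $\Gamma_{\times k,t}(G)$ are respectively the minimum and maximum of the same nonempty set of integers (the cardinalities of minimal $k$TDSs).

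Part (ii) is the crux, and I would prove both directions by manipulating the defining inequality for a $k$TDS. For the forward direction, suppose $S$ is a minimal $k$TDS and fix $v \in S$. Then $S \setminus \{v\}$ is not a $k$TDS, so there exists some vertex $v'$ with $|N_G(v') \cap (S \setminus \{v\})| < k$. Since $S$ is a $k$TDS, $|N_G(v') \cap S| \geq k$, and the only way both inequalities can hold is if $v \in N_G(v')$ and $|N_G(v') \cap S| = k$; by definition this makes $v'$ a $(S,k)$-opn of $v$, so $opn_k(v;S) \neq \emptyset$. The converse reverses this: if $v \in S$ has some $(S,k)$-opn $v'$, then $|N_G(v') \cap (S \setminus \{v\})| = k - 1 < k$, witnessing that $S \setminus \{v\}$ fails to be a $k$TDS, so $S$ is minimal.

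Part (iii) follows by the pigeonhole: if $\deg_G(v) = k$ and $S$ is any $k$TDS, then $k \leq |N_G(v) \cap S| \leq |N_G(v)| = k$, forcing $N_G(v) \subseteq S$. For part (iv), by hypothesis there is a $\Gamma_{\times k,t}(H)$-set $S$ that is simultaneously a minimal $k$TDS of $G$, hence its cardinality $|S| = \Gamma_{\times k,t}(H)$ is bounded above by $\Gamma_{\times k,t}(G)$, the maximum cardinality of such a set in $G$. The only place where care is needed is to make sure that in (ii) the opn can be either internal or external, which is consistent with Definition \ref{kOPN}; no step of the argument presents a substantive obstacle.
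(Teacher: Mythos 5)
Your proof is correct. The paper states this as an Observation with no proof at all, and your arguments (taking $V$ as a witness $k$TDS for (i), the removal-of-a-single-vertex characterization of minimality for (ii) — which implicitly uses the standard fact that the $k$TDS property is closed under supersets, so minimality need only be tested against $S\setminus\{v\}$ — pigeonhole for (iii), and the direct cardinality comparison for (iv)) are exactly the routine verifications the author leaves to the reader.
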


Observation \ref{obs} (iii) implies the next proposition.


\begin{prop}\label{remark2}
For any $k$-regular graph $G$, $\Gamma _{\times k,t}(G)=n$.
\end{prop}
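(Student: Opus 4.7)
The plan is to show that in fact $V$ is the \emph{only} $k$TDS of a $k$-regular graph $G$, which automatically forces minimality and gives the stated equality. Since $G$ is $k$-regular we have $\delta(G)=k$, so the $k$-tuple total domination number is indeed defined, and every vertex of $G$ has degree exactly $k$.

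First I would invoke Observation~\ref{obs}(iii): since every vertex $v\in V$ has degree $k$, its entire neighborhood $N_G(v)$ must lie in every $k$TDS $S$ of $G$. Taking the union of these neighborhoods over all $v\in V$ gives
\[
\bigcup_{v\in V} N_G(v) \subseteq S.
\]
Because $k\geq 1$ means $G$ has no isolated vertex, every vertex $u\in V$ is the neighbor of some $v$, so the left-hand side equals $V$. Hence $S=V$, i.e.\ $V$ is the unique $k$TDS of $G$.

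It then remains to verify that $V$ is \emph{minimal}, which I would do via the opn criterion in Observation~\ref{obs}(ii). For any $v\in V$ and any $v'\in N_G(v)$, we have $|N_G(v')\cap V|=\deg_G(v')=k$, so $v'\in\mathrm{opn}_k(v;V)$; in particular $\mathrm{opn}_k(v;V)\neq\emptyset$ for every $v\in V$. Thus $V$ is a minimal $k$TDS, and therefore $\Gamma_{\times k,t}(G)=|V|=n$.

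There is no real obstacle here; the entire argument is a direct consequence of Observation~\ref{obs}(iii), with Observation~\ref{obs}(ii) supplying a clean (though strictly speaking redundant) verification of minimality. The only point that needs a brief sanity check is that $k\geq 1$ rules out isolated vertices, so the union of open neighborhoods really does exhaust $V$.
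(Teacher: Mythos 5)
Your proof is correct and is exactly the argument the paper intends: the paper simply remarks that Observation~\ref{obs}(iii) implies the proposition, and your write-up fills in the same reasoning (every open neighborhood lies in any $k$TDS, their union is $V$ since $k\geq 1$, so $V$ is the unique and hence minimal $k$TDS). The extra verification of minimality via Observation~\ref{obs}(ii) is harmless and, as you note, redundant given uniqueness.
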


The converse of Proposition \ref{remark2} does not hold. For
example, if $G$ is the graph obtained by the union of two disjoint
complete graphs of order $k+1\geq 3$, with an edge between them,
then $G$ is not regular but $\Gamma_{\times k,t}(G)=2k+2$. The next two propositions are
useful for our investigations. 

\vspace{0.2 cm}
We recall that for any positive integer $k$, the $k$-\emph{join} $G\circ _{k}H$ of a graph $G$ to a graph $H$ with $\delta(H)\geq k$ is the graph obtained from the disjoint union of $G$ and $H$ by joining each
vertex of $G$ to at least $k$ vertices of $H$.

\begin{prop}
\label{Gamma t Pn } \emph{\cite{DHM}} For any path $P_{n}$ of order $n\geq 2$,
\[
\Gamma _{t}(P_{n})=2\lfloor(n+1)/3\rfloor.
\]
\end{prop}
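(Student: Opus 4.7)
The plan is to establish the bound in two directions: first, construct an explicit minimal TDS of size $2\lfloor(n+1)/3\rfloor$; then show no minimal TDS of $P_n$ can exceed this size.

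For the lower bound, I would write $n = 3q + r$ with $r \in \{0,1,2\}$ and take $S$ to be a disjoint union of consecutive pairs of vertices separated by single empty vertices: for $r=0$, $S = \bigcup_{j=0}^{q-1}\{v_{3j+1}, v_{3j+2}\}$; for $r=1$, shift by one to $S = \bigcup_{j=0}^{q-1}\{v_{3j+2}, v_{3j+3}\}$; for $r=2$, $S = \bigcup_{j=0}^{q}\{v_{3j+1}, v_{3j+2}\}$. In each case $|S| = 2\lfloor(n+1)/3\rfloor$, and verifying that $S$ is a TDS is immediate. Minimality follows from Observation~\ref{obs}(ii): for each pair $\{v_a, v_{a+1}\} \subseteq S$, the adjacent empty vertex $v_{a-1}$ (or $v_{a+2}$) has exactly $v_a$ (respectively $v_{a+1}$) as its unique $S$-neighbor and thus serves as a $1$-opn witness.

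For the upper bound, let $S$ be any minimal TDS of $P_n$ and decompose the path into maximal $S$-runs and $\bar S$-gaps. The TDS condition forbids three consecutive $\bar S$-vertices, so interior gaps have length $1$ or $2$ and each boundary gap has length at most $1$. A case analysis on the length $\ell$ of an $S$-run, applying Observation~\ref{obs}(ii), shows: a length-$1$ run not touching $v_1$ or $v_n$ forces at least one flanking gap to have length $2$; a length-$2$ run is always self-witnessing, since each of its two vertices is the unique $S$-neighbor of the other; a length-$3$ or length-$4$ interior run forces both flanking gaps to have length $2$; and no $S$-run can have length $\geq 5$, since the third vertex of such a run would have two $S$-neighbors each still with another $S$-neighbor, leaving it with no $1$-opn.

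I would finalize the count by induction on $n$, using the identity $2\lfloor(n+1)/3\rfloor = 2\lfloor(n-2)/3\rfloor + 2$. The cases $n \in \{2,3,4\}$ are checked by hand. For $n \geq 5$, if $v_1 \in S$ then necessarily $v_2 \in S$, $v_3 \notin S$, and $v_5 \in S$; the restriction of $S \setminus \{v_1, v_2\}$ to $\{v_4, \ldots, v_n\}$ is a minimal TDS of $P_{n-3}$, and by induction $|S| \leq 2 + 2\lfloor(n-2)/3\rfloor = 2\lfloor(n+1)/3\rfloor$. The remaining case $v_1 \notin S$ is handled analogously: depending on whether the first $S$-run has length $1$ or $2$, strip off $3$ or $4$ leftmost vertices and apply the inductive hypothesis to $P_{n-3}$ or $P_{n-4}$. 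The main technical obstacle will be verifying that each stripped set remains a minimal TDS of the shorter path---in particular that the $1$-opn of $v_4$ (when $v_4 \in S$) does not rely on the deleted vertex $v_3$---and confirming that the boundary arithmetic is not off by one in any residue class modulo $3$.
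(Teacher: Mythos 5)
The paper itself does not prove this proposition---it is quoted from \cite{DHM}---so there is no in-house argument to measure yours against; I am judging the attempt on its own. Your lower-bound construction is correct, and your structural analysis of a minimal TDS of $P_n$ (interior gaps of length $1$ or $2$, boundary gaps of length at most $1$, no run of length $\geq 5$, runs of length $3$ or $4$ forcing flanking gaps of length $2$) is sound. Two small slips there: a run of length $1$ cannot occur at all, since its unique vertex would have no neighbour in $S$; and in the lower bound the stated minimality witness is wrong---for an interior pair $\{v_{3j+1},v_{3j+2}\}$ the empty neighbour $v_{3j}$ has \emph{two} $S$-neighbours, namely $v_{3j-1}$ and $v_{3j+1}$, so it is not a $1$-opn. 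The correct witnesses are the two pair vertices themselves, each being the unique $S$-neighbour of the other, which you do state later; this is easily repaired.

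The genuine gap is in the closing induction. In the case $v_1\notin S$ you split only on whether the first run has length $1$ or $2$, but by your own run analysis the first run can have length $3$ or $4$: for instance $S=\{v_2,v_3,v_4,v_7,v_8\}$ is a minimal TDS of $P_8$ whose first run has length $3$. In that configuration minimality forces $v_1,v_5,v_6\notin S$ and $v_2,v_3,v_4,v_7\in S$, and stripping the first five vertices and invoking the inductive hypothesis on the remaining path gives only $|S|\leq 3+2\lfloor (n-4)/3\rfloor$, which for $n\equiv 1\pmod 3$ equals $2\lfloor (n+1)/3\rfloor+1$---one more than the target. The reason the naive step cannot close is that the residual set is a minimal TDS of the shorter path whose \emph{first} vertex is forced outside the set, while the extremal minimal TDSs of $P_m$ with $m\equiv 2\pmod 3$ all contain the first vertex (already visible for $P_5$, whose unique maximum minimal TDS is $\{u_1,u_2,u_4,u_5\}$). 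So you need either a strengthened induction hypothesis that simultaneously bounds the maximum size of a minimal TDS of $P_m$ avoiding an endpoint, or a global amortized count over the whole run--gap decomposition (charging each run its flanking half-gaps and checking the density never exceeds $2/3$ up to the boundary correction). As written, the upper bound is not established. The length-$4$ first-run case, by contrast, does close arithmetically, but it too must be stated and checked.
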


\begin{prop}
\label{gamma xkt G=k+1 iff ...}
\emph{\cite{HeKa09}} Let $G$ be a graph with $\delta (G)\geq k$. Then $\gamma
_{\times k,t}(G)=k+1$ if and only if $G=K_{k+1}$ or $G=F\circ _{k}K_{k+1}$
for some graph $F$.
\end{prop}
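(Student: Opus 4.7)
The plan is to prove both directions directly from the defining property of a $k$TDS. The key preliminary fact is that $\gamma_{\times k,t}(G)\geq k+1$ for every graph $G$ with $\delta(G)\geq k$: if $S$ is any $k$TDS and $v\in S$, then $v$ itself needs at least $k$ neighbors inside $S$, and those $k$ neighbors are distinct from $v$, so $|S|\geq k+1$.

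For the forward direction, assume $\gamma_{\times k,t}(G)=k+1$ and let $S$ be a minimum $k$TDS. I would first analyze the structure of $G[S]$. Since $|S|=k+1$ and for every $s\in S$ we have $|N_G(s)\cap S|\geq k$, while $N_G(s)\cap S\subseteq S\setminus\{s\}$ which has size exactly $k$, it must be that $N_G(s)\cap S=S\setminus\{s\}$. Thus every vertex of $S$ is adjacent in $G$ to every other vertex of $S$, so $G[S]=K_{k+1}$. Next, the $k$TDS condition applied to the vertices in $V\setminus S$ gives $|N_G(v)\cap S|\geq k$ for each such $v$, which says exactly that each vertex outside $S$ is joined to at least $k$ vertices of the $K_{k+1}$ induced by $S$. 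If $V=S$ then $G=K_{k+1}$; otherwise, letting $F=G[V\setminus S]$, we recognize $G$ as the $k$-join $F\circ_k K_{k+1}$ as defined just before the proposition.

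The converse is a short verification. For $G=K_{k+1}$, the whole vertex set is a $k$TDS (every vertex has $k$ neighbors) and by the preliminary lower bound no smaller set works, so $\gamma_{\times k,t}(K_{k+1})=k+1$. For $G=F\circ_k K_{k+1}$, take $S=V(K_{k+1})$: every vertex inside $S$ has $k$ neighbors in $S$ because $G[S]=K_{k+1}$, and every vertex of $F$ has at least $k$ neighbors in $S$ by the definition of the $k$-join. Hence $S$ is a $k$TDS of size $k+1$, and combining with the lower bound yields $\gamma_{\times k,t}(G)=k+1$.

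The only subtle step is the structural deduction $G[S]=K_{k+1}$ in the forward direction, since it relies on the tight pigeonhole squeeze $|N_G(s)\cap S|\geq k$ together with $|S\setminus\{s\}|=k$; everything else is bookkeeping with the definitions of $k$TDS and the $k$-join $\circ_k$. I expect no obstacles beyond carefully separating the cases $V=S$ and $V\neq S$ when producing the graph $F$.
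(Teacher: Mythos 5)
Your argument is correct and complete: the lower bound $\gamma_{\times k,t}(G)\geq k+1$, the pigeonhole deduction that $G[S]=K_{k+1}$ when $|S|=k+1$, and the identification of the remaining structure with the $k$-join $F\circ_k K_{k+1}$ are all sound, and the converse verification is routine. Note, however, that the paper itself gives no proof of this proposition --- it is imported by citation from Henning and Kazemi's earlier work \cite{HeKa09} --- so there is no in-paper argument to compare against; your proof is the natural self-contained one and matches the standard argument for this characterization.
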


\section{Cycles and complete mutipartite graphs}
\vskip0.2 true cm

In this section, we calculate the upper $k$-tuple total domination
number of a cycle and a complete multipartite graph.
Proposition \ref{remark2} implies $\Gamma _{\times 2,t}(C_{n})=n$.
The next proposition calculates $\Gamma _{t}(C_{n})$.

\begin{prop}
\label{Gamma t Cn} For any cycle $C_{n}$ of order $n\geq 3$,
\[
\Gamma _{t}(C_{n})=\left\{
\begin{array}{ll}
2\lfloor \frac{n}{3}\rfloor +1 & \mbox{if }n\equiv2\pmod{3},
\\
2\lfloor \frac{n}{3}\rfloor & \mbox{otherwise.}
\end{array}
\right.
\]
\end{prop}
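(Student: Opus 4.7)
The approach is the standard construction-plus-structure combination. For the lower bound, I plan to exhibit, for each residue of $n$ modulo $3$, an explicit minimal $1$-tuple total dominating set of the claimed cardinality; for the matching upper bound, I plan to analyse the block decomposition of an arbitrary minimal $1$TDS around the cycle.

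For the construction, label the vertices cyclically $v_0, v_1, \ldots, v_{n-1}$. My basic tile is the length-$3$ pattern $S_2 N_1$ consisting of two consecutive chosen vertices followed by one unchosen vertex. If $n = 3k$, I tile the cycle with $k$ copies of $S_2 N_1$, giving $|S| = 2k$. If $n = 3k + 1$, I replace one tile by $S_2 N_2$, keeping $|S| = 2k$. If $n = 3k + 2$, I install a single tile $S_3 N_2$ in place of one $S_2 N_1$, raising $|S|$ to $2k + 1$. In each case total domination is immediate, and minimality follows from Observation~\ref{obs}(ii) by exhibiting, for each $v \in S$, a concrete $(S,1)$-opn, typically the partner of $v$ in its length-$2$ $S$-block, or a vertex of an adjacent length-$2$ $N$-block.

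For the upper bound, let $S$ be an arbitrary minimal $1$TDS and write the cycle as an alternating sequence of maximal $S$-runs $A_1,\ldots,A_t$ and maximal non-$S$-runs $B_1,\ldots,B_t$. The central task is to prove four structural constraints: (i) $|B_j| \le 2$, since a non-$S$-run of length~$3$ would contain an interior vertex with no $S$-neighbour; (ii) $|A_i| \ge 2$, since an isolated $S$-vertex has no $S$-neighbour at all; (iii) $|A_i| \le 4$, since in an $S$-run of length $\ge 5$ the third vertex has both cycle neighbours as interior vertices with two $S$-neighbours, and so fails to possess any $(S,1)$-opn; and (iv) if $|A_i| \ge 3$ then both adjacent $N$-blocks have length $2$, else the endpoint of $A_i$ cannot be supplied with a $(S,1)$-opn. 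Writing $|A_i| = 2 + r_i$ with $r_i \in \{0,1,2\}$ and $|B_j| = 1 + s_j$ with $s_j \in \{0,1\}$, I get $|S| = 2t + \sum r_i$ and $n = 3t + \sum r_i + \sum s_j$; constraint (iv) then charges each index $i$ with $r_i>0$ to two adjacent $s_j$'s being equal to $1$, and a double-counting argument yields $\sum s_j \ge \tfrac{1}{2}\sum r_i$ and hence $|S| \le \tfrac{2n}{3}$. A short case analysis on the residue $n \bmod 3$ sharpens this integer bound to the values in the statement, matched by the constructions above.

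The single nontrivial point is verifying constraint (iv) and converting it into the clean counting inequality above; the cyclic (rather than linear) structure forces me to handle wrap-around carefully, and the final integrality argument relies on arranging the unique length-$3$ $S$-block together with its two mandatory length-$2$ $N$-flanks around the cycle in the case $n \equiv 2 \pmod{3}$.
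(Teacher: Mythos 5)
Your structural analysis for the upper bound is correct and, in fact, more careful than what is needed for the stated bound; but your construction for $n\equiv 2\pmod 3$ contradicts your own constraint (iv), and this is not a repairable slip: the constraint shows the \emph{statement itself} fails for $n\equiv 2\pmod 6$, $n\ge 8$. Your proposed set for $n=3k+2$ is $(S_2N_1)^{k-1}S_3N_2$, so the unique length-$3$ $S$-block is flanked by a length-$2$ gap on one side but by a length-$1$ gap on the other (the $N_1$ of the preceding tile). By (iv) the endpoint of that block adjacent to the length-$1$ gap has no $(S,1)$-opn, so the set is not minimal. Concretely, for $n=8$ the set $\{1,2,4,5,6\}$ (and every other candidate of size $5$) fails: a $5$-set must decompose as one run of length $2$ and one of length $3$ with gaps of lengths $1$ and $2$, and the length-$3$ run always touches the length-$1$ gap. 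Hence $\Gamma_t(C_8)=4$, not $2\lfloor 8/3\rfloor+1=5$. You already sensed the difficulty in your last sentence: a length-$3$ block with two mandatory length-$2$ flanks occupies $3+2+2=7\equiv 1\pmod 3$ positions, so it cannot be completed by $S_2N_1$ tiles to a cycle of length $\equiv 2\pmod 3$. Pushing your counting through (equality in $\sum s_j\ge\tfrac12\sum r_i$ forces every length-$2$ gap to be shared by two long runs, i.e.\ no length-$2$ runs at all, whence $q=1$, $p=0$ and $n=5+6r$) shows that the value $2\lfloor n/3\rfloor+1$ is attained only when $n\equiv 5\pmod 6$ (e.g.\ $S_3N_2$ for $n=5$, $S_3N_2S_4N_2$ for $n=11$), while $\Gamma_t(C_n)=2\lfloor n/3\rfloor$ for $n\equiv 2\pmod 6$.

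For what it is worth, the paper's proof has the same defect: its claimed minimal TDS $\{3i+1,3i+2\mid 0\le i\le\lfloor n/3\rfloor-1\}\cup\{n\}$ is not minimal for $n=8$ (deleting vertex $2$ from $\{1,2,4,5,8\}$ still leaves a TDS of $C_8$). So you should not try to force your argument to reach the stated formula; the correct conclusion of your run-decomposition is the corrected statement above.
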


\begin{proof}
Let $V(C_{n})=\{1,2,...,n\}$, and let $ij\in E(C_{n})$ if and only
if $j\equiv i+1\pmod{n}$. Let $S$ be a $\Gamma _{t}(C_{n})$-set. If
at least one vertex of any two consecutive vertices belongs to $S$,
then $n\equiv 0\pmod{3}$. Since, otherwise, $S$ will contain at
least three consecutive vertices of $V(C_n)$, which contracts the
minimality of $S$. Hence $|S|=2\lfloor\frac{n}{3}\rfloor$, when
$n\equiv 0\pmod{3}$. Now, assume there exist two consecutive
vertices, say $1$ and $n$, out of $S$. Then $S$ is
also a minimal TDS in the path $P_n=C_n-\{e\}$ in which $e=1n\in E(C_n)$. This
implies 
\[
\begin{array}{llll}
|S| & = & \Gamma _{t}(C_{n})\\
      &\leq & \Gamma_{t}(P_{n}) & \\
     & = & 2\lfloor(n+1)/3\rfloor & (\mbox{by Proposition \ref{Gamma t Pn }}) .
\end{array}
\]
Now since $\{3i+1,3i+2|
0\leq i\leq \lfloor \frac{n}{3}\rfloor -1\}$ is a minimal TDS in
$C_n$ with cardinality $\Gamma _{t}(P_{n})$ when
$n\not\equiv2\pmod{3}$, we obtain $\Gamma
_{t}(C_{n})=2\lfloor\frac{n+1}{3}\rfloor=2\lfloor \frac{n}{3}\rfloor
$. Now let $n\equiv2\pmod{3}$ and let $S$ be a minimal TDS of $C_{n}$
with cardinality $\Gamma _{t}(P_{n})$. Then there exist seven
consecutive vertices, say 1, 2, ..., 7, such that $S\cap
\{1,2,...,7\}=\{i\}$ which $i=2$ or 4. Since $S-\{i+1\}$ is a TDS
of $C_n$, we
obtain $|S|<\Gamma _{t}(P_{n})$, and so $\Gamma
_{t}(C_{n})\leq \Gamma _{t}(P_{n})-1$. Now since $\{3i+1,3i+2| 0\leq i\leq
\lfloor \frac{n}{3}\rfloor -1\}\cup \{n\}$ is a minimal TDS of $C_n$
with cardinality $\Gamma _{t}(P_{n})-1=2\lfloor\frac{n}{3}\rfloor
+1$, we obtain $\Gamma _{t}(C_{n})=2\lfloor\frac{n}{3}\rfloor +1$.
\end{proof}

\begin{thm}
\label{Gamma xkt complete multipartite} Let
$G=K_{n_{1},n_{2},...,n_{p}}$ be a complete $p$-partite graph with
$\delta(G)\geq k \geq 1$ in which $n_1\leq n_{2}\leq ...\leq n_{p}$. Then
\[
\Gamma _{\times k,t}(G)=k+\max \{x~|~ (\ell-1)x=k\mbox{ and }x\leq
\min \{k,n_{p-\ell+1},...,n_{p}\}\}.
\]
\end{thm}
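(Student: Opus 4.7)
I would begin by parameterising any subset $S \subseteq V(G)$ by its profile $s_i = |S \cap V_i|$, where $V_1,\ldots,V_p$ are the partite sets in non-decreasing size order. Since every $v \in V_i$ has $N(v) = V \setminus V_i$, one has $|N(v) \cap S| = |S| - s_i$; hence $S$ is a $k$TDS iff $s_i \le m$ for every $i$, where $m := |S|-k$. By Observation~\ref{obs}(ii) minimality is equivalent to each $v \in S$ having an opn, and for $v \in S \cap V_i$ such an opn is exactly a vertex $v' \in V_j$ with $j \ne i$ and $|N(v') \cap S| = |S|-s_j = k$, i.e.\ $s_j = m$. Introducing $T = \{i : s_i = m\}$ and $\ell = |T|$, the minimality condition collapses to the clean combinatorial requirement $\ell \ge 2$.

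\textbf{Upper bound and construction.} From $|S| = \ell m + \sum_{i \notin T} s_i = k + m$ I read off the identity $\sum_{i \notin T} s_i = k - (\ell-1)m$, whose nonnegativity gives $(\ell-1)m \le k$. Together with $s_i \le n_i$ for $i \in T$ (so, taking $T$ to be the $\ell$ largest parts after sorting, $m \le n_{p-\ell+1}$) and $m \le k$ forced by $\ell \ge 2$, this yields the admissible region $\{(m,\ell) : \ell \ge 2,\ (\ell-1)m \le k,\ m \le \min\{k,n_{p-\ell+1}\}\}$, and hence an upper bound on $\Gamma_{\times k,t}(G)$ of the claimed shape. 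For the matching construction, given an extremal pair $(x,\ell)$ with $(\ell-1)x = k$ and $x \le n_{p-\ell+1}$, I would let $T$ consist of the $\ell$ largest parts and set $s_i = x$ for $i \in T$, $s_i = 0$ otherwise. Then $|S| = \ell x = k+x$; every vertex has at least $k$ neighbours in $S$; and every $v \in S \cap V_i$ (with $i \in T$) has an opn in any other part of $T$, whose existence is guaranteed by $\ell \ge 2$.

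\textbf{Main obstacle.} The delicate point is bridging the inequality $(\ell-1)m \le k$ that arises naturally in the upper bound with the equality $(\ell-1)x = k$ that the theorem's formula insists upon. Closing this gap calls for an exchange argument showing that any extremal profile with strictly positive residual $k - (\ell-1)m > 0$ can be converted---either by absorbing the leftover mass back into $T$, or by enlarging $T$ and readjusting---into one with zero residual and $|S|$ not decreased, exploiting the monotonicity of $n_{p-\ell+1}$ in $\ell$. Carrying out this reduction cleanly, while preserving the constraints $s_i \le n_i$, $s_i \le m$, and $\ell \ge 2$ at every step, is where I expect the technical work of the proof to concentrate.
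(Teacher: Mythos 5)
Your reduction to profiles $s_i=|S\cap V_i|$, the characterization of $k$TDSs by $s_i\le m:=|S|-k$, and the observation that minimality (via Observation~\ref{obs}(ii)) is equivalent to $|T|\ge 2$ for $T=\{i:s_i=m\}$ are all correct, and your construction giving the lower bound $\Gamma_{\times k,t}(G)\ge k+x$ for any $x$ with $(\ell-1)x=k$ and $x\le\min\{k,n_{p-\ell+1}\}$ is sound. But the step you defer --- upgrading the inequality $(\ell-1)m\le k$ to the equality $(\ell-1)x=k$ --- is not a technicality that an exchange argument will repair: the equality simply does not hold for all minimal $k$TDSs, and in fact the theorem is false as stated. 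A minimal $k$TDS need not occupy its nonempty parts equally. Take $G=K_{2,2,2}$ and $k=3$, and let $S$ have profile $(2,2,1)$, so $|S|=5$ and $m=2$. Every vertex of the first two parts has exactly $3$ neighbours in $S$ and every vertex of the third part has $4$, so $S$ is a $3$TDS; since $T=\{1,2\}$ has size $2$, $S$ is minimal. Hence $\Gamma_{\times 3,t}(K_{2,2,2})\ge 5$ (indeed $=5$ by Theorem~\ref{Gamma xkt=<n-1 or =<n-delta+k}), yet the set being maximized in the theorem is \emph{empty} here: $\ell=2$ forces $x=3>2=n_2$, and $\ell=3$ gives $2x=3$ with no integer solution. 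Likewise $K_{2,2,2,2}$ with $k=3$ has $\Gamma_{\times 3,t}=5$ (profile $(2,2,1,0)$) while the formula returns $3+1=4$.

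For what it is worth, your analysis is more careful than the paper's own proof, which founders at exactly the point you flagged. The paper asserts that minimality forces $\sum_{i_j\in L}x_{i_j}=k$ for \emph{every} $(\ell-1)$-subset $L$ of the occupied parts, and concludes that all the $x_{i_j}$ are equal. Minimality only guarantees that for each occupied part $i$ there exists \emph{some} other part $j$ with $|S|-x_j=k$, i.e.\ that at least two parts attain the level $m$ --- precisely your condition $|T|\ge 2$ --- and the profile $(2,2,1)$ above satisfies this without being constant. In your framework the correct value falls out directly: $\Gamma_{\times k,t}(G)=k+\max\bigl\{m : m\le\min\{k,n_{p-1}\}\mbox{ and }\sum_{i}\min\{n_i,m\}\ge k+m\bigr\}$, which agrees with the paper's formula only when the residual $k-(\ell-1)m$ can be driven to zero; the examples above show it cannot always be. I would recommend writing up your argument with this corrected right-hand side rather than trying to bridge to the stated one.
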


\begin{proof}
Let $S$ be a minimal $k$TDS of $G=K_{n_{1},n_{2},...,n_{p}}$ and let $V=X_{1}\cup X_{2}\cup ...\cup X_{p}$ be the partition of the vertex set of $G$ to the $p$ independent sets $X_{1}$, $X_{2}$, $\cdots$, $X_{p}$ in which $|X_i|=n_i$ for each $i$ and $n_1\leq n_{2}\leq ...\leq n_{p}$. Let $I=\{i_j ~|~
j=1,..,\ell\}$ be an index subset of $\{1,2,..,p\}$ for some $2\leq
\ell\leq p$ such that $S\cap X_{i}\neq \emptyset $ if and only if
$i\in I$. Also assume $| S\cap X_{i_j}| =x_{i_j}$\ for each
$i_j\in I$, and $x_{i_1}\leq x_{i_2}\leq ...\leq x_{i_{\ell}}$. The
minimality of $S$ implies $x_{i_j}\leq k$ for each $i_j\in I$,
and there exists a $(\ell-1)$-subset $L\subseteq I$ such that
$\sum_{i_j \in L}x_{i_j}=k$. Then, by the minimality of $S$, $\sum_{i_j\in L}x_{i_j}=k$ for every $(\ell-1)$-subset $L\subseteq I$, and so $x_{i_1}=x_{i_2}=...=x_{i_{\ell}}$. Let
$x:=x_{i_1}=x_{i_2}=...=x_{i_{\ell}}$. Then $%
x_{i_1}+x_{i_2}+...+x_{i_{\ell}}=\ell x=k+x\leq \Gamma _{\times
k,t}(G)$ where $x\leq \min \{k,n_{i_1},...,n_{i_{\ell}}\}$, and so
\[
\begin{array}{lll}
\Gamma _{\times k,t}(G) & = & k+\max \{x~|~ (\ell-1)x=k\mbox{ and }x\leq \min \{k,n_{i_1},...,n_{i_{\ell}}\}\} \\
& = & k+\max \{x~|~ (\ell-1)x=k\mbox{ and }x\leq \min
\{k,n_{p-\ell+1},...,n_{p}\}\}.
\end{array}
\]
\end{proof}

\begin{cor}
\label{Gamma xkt complete multipartite=2k} Let $G=K_{n_{1},n_{2},...,n_{p}}$ be a complete $p$-partite graph. For any integer $k\geq 1$ if $|\{~i~|~n_{i}\geq k\}|\geq 2$, then $\Gamma _{\times k,t}(G)=2k.$
\end{cor}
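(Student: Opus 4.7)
The plan is to deduce this as a quick consequence of Theorem~\ref{Gamma xkt complete multipartite}, so I would not argue from scratch but simply inspect the max formula proved there. In
\[
\Gamma_{\times k,t}(G)=k+\max\{x\mid (\ell-1)x=k\text{ and }x\leq\min\{k,n_{p-\ell+1},\ldots,n_{p}\}\},
\]
the constraint $x\leq k$ is present in every admissible pair $(\ell,x)$, so the maximum is at most $k$. This already gives the universal upper bound $\Gamma_{\times k,t}(G)\leq 2k$, with no hypothesis needed beyond what Theorem~\ref{Gamma xkt complete multipartite} assumes.

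For the matching lower bound I would exhibit the simplest possible feasible witness, namely $\ell=2$ and $x=k$. With $\ell=2$ the equation $(\ell-1)x=k$ becomes $x=k$, and admissibility reduces to $k\leq\min\{k,n_{p-1},n_{p}\}$, i.e.\ to the two inequalities $n_{p-1}\geq k$ and $n_{p}\geq k$. This is exactly where the hypothesis enters: because the parts are listed so that $n_{1}\leq n_{2}\leq\cdots\leq n_{p}$, the existence of at least two indices $i$ with $n_{i}\geq k$ forces those two indices to lie among the top positions, so that the two largest parts satisfy $n_{p-1}\geq k$ and $n_{p}\geq k$. Hence $(\ell,x)=(2,k)$ is feasible in the maximum, and consequently $\Gamma_{\times k,t}(G)\geq k+k=2k$.

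Combining the two bounds yields $\Gamma_{\times k,t}(G)=2k$. I do not foresee any real obstacle in this argument: all the substantive work is already done in Theorem~\ref{Gamma xkt complete multipartite}, and the only point deserving a line of care is translating the counting hypothesis $|\{i\mid n_{i}\geq k\}|\geq 2$ into the clean pair of inequalities on the two largest parts, which is immediate from the monotonicity of the sorted sequence $n_1\leq\cdots\leq n_p$.
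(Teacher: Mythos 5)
Your derivation is correct and is exactly how the paper intends the corollary to follow from Theorem~\ref{Gamma xkt complete multipartite}: the constraint $x\leq k$ gives the upper bound $2k$, and the hypothesis that at least two parts have size at least $k$ (hence, after sorting, $n_{p-1}\geq k$ and $n_p\geq k$) makes the pair $\ell=2$, $x=k$ feasible, giving the lower bound. The paper states the corollary without proof as an immediate consequence of the theorem, so your argument matches the intended one; the only trivial point you leave implicit is that the hypothesis also guarantees $\delta(G)\geq k$, so the theorem applies.
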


In a similar way, the next theorem can be proved.

\begin{thm}
\label{Gamma xkt complete multipartite=<k+min} Let $G=K_{n_{1},n_{2},...,n_{p}}$ be a complete $p$-partite graph with
$\delta(G) \geq k \geq 1$ in which $n_1\leq n_{2}\leq ...\leq n_{p}$. Then
\[
\gamma _{\times k,t}(G)\leq k+\min \{x| (\ell-1)x=k\mbox{ and }x\leq
\min \{k,n_{1},...,n_{\ell}\}\}.
\]
\end{thm}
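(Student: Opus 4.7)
The plan is to mirror the argument in the proof of Theorem~\ref{Gamma xkt complete multipartite}, but with a crucial change in perspective: because $\gamma_{\times k,t}(G)$ is a \emph{minimum} rather than a \emph{maximum}, an upper bound on it only requires constructing a single $k$TDS of the prescribed size for each admissible pair $(\ell,x)$; no minimality check is needed.

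First, I would fix any pair $(\ell,x)$ with $2\le \ell\le p$, $(\ell-1)x=k$, and $x\le \min\{k,n_{1},\dots,n_{\ell}\}$, and, using the partition $V=X_{1}\cup\dots\cup X_{p}$ with $|X_{i}|=n_{i}$, choose arbitrary subsets $S_{j}\subseteq X_{j}$ with $|S_{j}|=x$ for $j=1,\dots,\ell$ (possible because $x\le n_{j}$). Setting $S:=S_{1}\cup\dots\cup S_{\ell}$ then gives $|S|=\ell x=k+x$.

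The key verification is that $S$ is a $k$TDS of $G$. For a vertex $v\in X_{j}$ with $j\le \ell$, $v$ is adjacent in $G$ to every vertex of $S\setminus X_{j}$ and to no vertex of $S\cap X_{j}=S_{j}$, so $|N_{G}(v)\cap S|=(\ell-1)x=k$. For a vertex $v\in X_{j}$ with $j>\ell$, $v$ is adjacent to every vertex of $S$, so $|N_{G}(v)\cap S|=\ell x=k+x\ge k$. Hence $S$ is a $k$TDS and $\gamma_{\times k,t}(G)\le k+x$; taking the minimum over all admissible $x$ yields the stated inequality.

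The main subtlety, and apparently the reason for the one-sided form of the conclusion, is that a minimum $k$TDS need not be supported on the $\ell$ \emph{smallest} parts; it might distribute vertices among larger parts in order to admit a different value of $x$. Turning the inequality into an equality would require applying the balance analysis from the proof of Theorem~\ref{Gamma xkt complete multipartite} to every minimum $k$TDS and then optimizing over which $\ell$ parts are touched, which is not needed for the upper bound established here.
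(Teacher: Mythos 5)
Your proof is correct and matches what the paper intends: the paper gives no explicit argument (it only says the result follows ``in a similar way'' to Theorem~\ref{Gamma xkt complete multipartite}), and your construction of an $x$-element subset in each of the $\ell$ smallest parts is exactly the constructive half of that proof, with the correct observation that for an upper bound on $\gamma_{\times k,t}$ no minimality check is required. The degree counts $(\ell-1)x=k$ inside the chosen parts and $\ell x\geq k$ outside them are verified correctly, so the bound $\gamma_{\times k,t}(G)\leq k+x$ holds for every admissible pair $(\ell,x)$ and hence for the minimum.
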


\section{Two upper bounds}
\vskip0.2 true cm

In this section, we present two upper bounds for the upper $k$-tuple
total domination number of a graph. The first is in terms of $k$,
the order and the minimum degree of the graph, and the second is in
terms of the upper $\ell $-tuple total domination number of the
graph for some $\ell <k$.

\begin{thm}
\label{Gamma xkt=<n-1 or =<n-delta+k} If $G$ is a graph of order $n$
with $\delta\geq k+1\geq 2$, then $\Gamma _{\times k,t}(G)\leq
n-\delta +k$, and this bound is sharp.
\end{thm}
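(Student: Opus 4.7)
The plan is to show that every minimal $k$TDS $S$ of $G$ satisfies $|V\setminus S|\geq \delta-k$, which rearranges to the claimed bound. Let $S$ be a $\Gamma_{\times k,t}$-set of $G$. By the minimality of $S$ together with Observation~\ref{obs}(ii), I may pick any vertex $v\in S$ and an associated vertex $v'\in opn_k(v;S)$; by the definition of a $k$-opn, $v'\in N(v)$ and $|N(v')\cap S|=k$. Whether $v'$ is internal (lying in $S$) or external (lying in $V\setminus S$) will play no role in what follows.

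The argument then reduces to a single degree count at $v'$. Since $\deg_G(v')\geq \delta$ and exactly $k$ neighbors of $v'$ belong to $S$, at least $\delta-k$ neighbors of $v'$ must belong to $V\setminus S$. These $\delta-k$ neighbors are distinct vertices of $V\setminus S$, so $|V\setminus S|\geq \delta-k$, and therefore $\Gamma_{\times k,t}(G)=|S|\leq n-\delta+k$.

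For the sharpness claim, the complete graph $K_n$ with $n\geq k+2$ already suffices: one has $\delta(K_n)=n-1\geq k+1$, and from the remark following the definition of $\Gamma_{\times k,t}$ we know $\Gamma_{\times k,t}(K_n)=k+1=n-(n-1)+k=n-\delta+k$.

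There is no substantive obstacle here: the whole argument rests on extracting, via Observation~\ref{obs}(ii), a vertex $v'$ whose open neighborhood meets $S$ in exactly $k$ vertices, after which the minimum-degree bound at $v'$ forces the required $\delta-k$ vertices outside $S$. The slight care needed is to note that the conclusion is insensitive to whether the chosen private neighbor $v'$ is internal or external to $S$, so no case split is required.
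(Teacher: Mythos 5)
Your proof is correct, and the core of the upper bound argument is the same as the paper's: extract, via Observation~\ref{obs}(ii), a vertex $v'$ with $|N(v')\cap S|=k$ and count its neighbors outside $S$. Where you differ is in two streamlinings. First, the paper splits into the cases $v'\in S$ and $v'\notin S$, obtaining $|S|\leq n-\delta+k$ in the first case and the strictly stronger $|S|\leq n-\delta+k-1$ in the second; your observation that the count of $|N(v')\setminus S|\geq \delta-k$ vertices of $V\setminus S$ is insensitive to the location of $v'$ is valid (since $v'\notin N(v')$ in a simple graph) and removes the case split, at the cost of not recording the sharper conclusion in the external case. Second, for sharpness you use $K_n$ with $n\geq k+2$, which indeed satisfies $\delta=n-1\geq k+1$ and $\Gamma_{\times k,t}(K_n)=k+1=n-\delta+k$; this is a legitimate and much shorter witness than the paper's join construction $G_b=H_b\vee T$, but note that the paper's family realizes equality for \emph{every} prescribed pair $(\delta,k)$ with $\delta\geq k+1$ and infinitely many orders $n$ for each, whereas $K_n$ only exhibits the single case $\delta=n-1$. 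If the intended meaning of ``sharp'' is merely that equality is attained by some graph, your example suffices.
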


\begin{proof}
Let $G$ be a graph of order $n$ with $\delta\geq k+1\geq 2$ and let $S$ be a $\Gamma _{\times k,t}(G)$-set. Then for every $v\in S$ there exist a $k$-subset $S_{v}\subseteq S$ and a vertex $v'\in V(G)$ such that $N_{G}(v')\cap S=S_{v}$, by Observation \ref{obs} (ii). If $v'\in S $, then
$N_{G}(v')-S_{v}\subseteq V(G)-S$, and so 
\[
\delta -k\leq deg(v')-k\leq n-| S| =n-\Gamma _{\times k,t}(G),
\]
which implies $\Gamma _{\times k,t}(G)\leq n-\delta +k$. If $v'\not\in S$, then
$v'$ is not adjacent to at least $| S| -k$ vertices of $S-S_{v}$, and so
\[
\delta \leq deg(v')\leq n-| S| +k-1=n-\Gamma _{\times k,t}(G)+k-1,
\]
which implies $\Gamma _{\times k,t}(G)<n-\delta +k$.
\vspace{0.2 cm}

The sharpness of this bound can be seen as following: Let $\delta \geq
k+1\geq 2$ be integers. Consider $b$ vertex-disjoint complete graphs
$K_{k+1}$ where $b\geq \lceil \frac{\delta }{k+1}\rceil $, and let
$H_{b}=K_{k+1}+...+K_{k+1}$ be the union of $b$ vertex-disjoint
complete graphs $K_{k+1}$. Also consider an empty graph $T$ with
$\delta -k$ vertices. Let $G_{b}=H_{b}\vee T$ be the \emph{join} of $H_b$ and $T$, which is the union of
$H_{b}$ and $T$ such that every vertex of $H_{b}$ is adjacent to all
vertices in $T$. Then $G_{b}$ is a connected graph of order
$n=b(k+1)+\delta -k$ with minimum degree $\delta $. Since $V(H_{b})$
is a minimal $k$TDS of $G_{b}$, we obtain
$\Gamma _{\times k,t}(G_b)\geq n-\delta +k$, and consequently $\Gamma
_{\times k,t}(G_b)=n-\delta +k$.
\end{proof}


\begin{thm}
\label{Gamma xkt=<Gamma x(k-ell)t+ell} Let $G$ be a graph with
$\delta \geq k\geq 1$. Let $L=\cap _{v\in S}S_{v}$ be a set of cardinality $\ell$ in which $S$ is a
$\Gamma _{\times k,t}(G)$-set and $S_v$ is the set given in
Definition \ref{kOPN}. If $\ell <k$, then
\[
\Gamma _{\times k,t}(G)\leq \Gamma _{\times (k-\ell ),t}(G)+\ell.
\]
\end{thm}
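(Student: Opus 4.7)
The natural candidate to exhibit is the set $T:=S\setminus L$, which has cardinality $\Gamma_{\times k,t}(G)-\ell$. The plan is to show that $T$ is itself a minimal $(k-\ell)$TDS of $G$; the theorem then follows at once because $|T|\le \Gamma_{\times(k-\ell),t}(G)$.

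First I would verify that $T$ is a $(k-\ell)$TDS. For every $x\in V(G)$, since $S$ is a $k$TDS we have $|N_G(x)\cap S|\ge k$, while $|N_G(x)\cap L|\le |L|=\ell$. Hence
\[
|N_G(x)\cap T|=|N_G(x)\cap S|-|N_G(x)\cap L|\ge k-\ell,
\]
so $T$ is a $(k-\ell)$TDS (and the hypothesis $\delta\ge k>k-\ell$ ensures this parameter is defined).

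Next, by Observation \ref{obs}(ii) it suffices to show $opn_{k-\ell}(v;T)\ne\emptyset$ for every $v\in T$. Since $S$ is a minimal $k$TDS, for each such $v$ there exists $v'\in opn_{k}(v;S)$, so that $N_G(v')\cap S=S_v$, where $v\in S_v$ and $|S_v|=k$. Because $L=\bigcap_{u\in S}S_u\subseteq S_v$, and $v\in T$ means $v\notin L$, we obtain
\[
N_G(v')\cap T=(N_G(v')\cap S)\setminus L=S_v\setminus L,
\]
and $|S_v\setminus L|=k-\ell$ with $v\in S_v\setminus L$. Thus $v'\in opn_{k-\ell}(v;T)$, proving that $T$ is a minimal $(k-\ell)$TDS.

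The only subtle point is the minimality step, and it relies on the fact that every $S_v$ contains $L$, which is immediate from the definition $L=\bigcap_{u\in S}S_u$. Combining the two parts yields
\[
\Gamma_{\times k,t}(G)-\ell=|T|\le \Gamma_{\times(k-\ell),t}(G),
\]
which is the required inequality.
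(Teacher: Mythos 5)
Your proposal is correct and follows exactly the same route as the paper: exhibit $S\setminus L$ as a minimal $(k-\ell)$TDS and conclude $\Gamma_{\times(k-\ell),t}(G)\ge |S|-\ell$. The paper merely asserts that $S\setminus L$ is a minimal $(k-\ell)$TDS, whereas you supply the verification (both the domination count and the private-neighbor check via $L\subseteq S_v$), which is a welcome addition but not a different argument.
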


\begin{proof}
Let $S$ be a $\Gamma _{\times k,t}(G)$-set and let $L=\cap _{v\in
S}S_{v}$ be a set of cardinality $\ell$ in which $S_v$ is the set given in
Definition \ref{kOPN} and $\ell<k$. Since $S-L$ is a minimal
($k-\ell $)TDS of $G$, we obtain
\[
\begin{array}{lll}
\Gamma _{\times (k-\ell ),t}(G) & \geq & | S-L| \\
& = & | S| -\ell \\
& = & \Gamma _{\times k,t}(G)-\ell.
\end{array}
\]
\end{proof}

\section{The Cartesian product and a Vizing-like conjecture}
\vskip0.2 true cm
The \emph{Cartesian product} $G\, \Box \, H$ of two graphs $G$ and
$H$ is a graph with the vertex set $V(G)\times V(H)$ and two
vertices $(g_1,h_1)$ and $(g_2,h_2)$ are adjacent if and only if
either $g_1=g_2$ and $(h_1,h_2)\in E(H)$, or $h_1=h_2$ and
$(g_1,g_2)\in E(G)$. For more information on product graphs see \cite{IK}. The Cartesian product $K_n \Box K_m$ is known as the $n \times m$ \emph{rook's graph}, as edges represent possible moves by a rook on an $n \times m$ chess board. For example see Figure \ref{fi:rook34}.
\begin{figure}[htp]
\centering
\begin{tikzpicture}
\matrix[nodes={draw, thick, fill=black!20, circle},row sep=0.7cm,column sep=0.7cm] {
  \node(11){}; &
  \node(12){}; &
  \node(13){}; &
  \node(14){}; \\
  \node(21){}; &
  \node(22){}; &
  \node(23){}; &
  \node(24){}; \\
  \node(31){}; &
  \node(32){}; &
  \node(33){}; &
  \node(34){}; \\
};
\draw (11) to (12) to (13) to (14); \draw (11) to[bend left] (13); \draw (12) to[bend left] (14); \draw (11) to[bend left] (14);
\draw (21) to (22) to (23) to (24); \draw (21) to[bend left] (23); \draw (22) to[bend left] (24); \draw (21) to[bend left] (24);
\draw (31) to (32) to (33) to (34); \draw (31) to[bend left] (33); \draw (32) to[bend left] (34); \draw (31) to[bend left] (34);
\draw (11) to (21) to (31) to[bend left] (11);  \draw (12) to (22) to (32) to[bend left] (12);  \draw (13) to (23) to (33) to[bend left] (13);  \draw (14) to (24) to (34) to[bend left] (14);
\end{tikzpicture}
\caption{The $3 \times 4$ rook's graph, i.e., $K_3 \Box K_4$.}\label{fi:rook34}
\end{figure}

Now for integers $n\geq m\geq k+1\geq 3$ we consider the $n\times m$ rook's graph $K_{n}\Box K_{m}$ with the vertex set $V=\{(i,j)~|~1\leq i \leq n,~1\leq j \leq m\}$. Since the set $\{(i,j)~|~1\leq i \leq n,~ 1\leq j \leq k\}$ is a minimal $k$TDS of $K_{n}\Box K_{m}$, we have the following proposition.

\begin{prop}\label{Gamma_{xk,t}(Kn Box Km >=kn}
For any integers $n\geq m\geq k+1\geq 3$, $\Gamma_{\times k,t}(K_n\Box K_m)\geq kn$.
\end{prop}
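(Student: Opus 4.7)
The plan is to produce an explicit minimal $k$TDS of $K_n\Box K_m$ having cardinality exactly $kn$; once such a set is exhibited, the inequality $\Gamma_{\times k,t}(K_n\Box K_m)\geq kn$ follows immediately from the definition of $\Gamma_{\times k,t}$. Following the hint in the statement, I would take the ``first $k$ columns'' of the rook's grid, namely
\[
S=\{(i,j) \mid 1\leq i\leq n,\ 1\leq j\leq k\},
\]
whose cardinality is clearly $kn$.

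First I would verify that $S$ is a $k$TDS. Recall that in $K_n\Box K_m$ the open neighborhood of a vertex $(a,b)$ consists of the other $n-1$ vertices in column $b$ together with the other $m-1$ vertices in row $a$. I split into two cases according to the column index of a vertex $v=(a,b)$. If $b\leq k$, then $N(v)\cap S$ contains the $k-1$ vertices $(a,j)$ with $j\in\{1,\dots,k\}\setminus\{b\}$ together with the $n-1$ vertices $(i,b)$ with $i\neq a$, giving $|N(v)\cap S|=k+n-2\geq k$ (using $n\geq k+1\geq 3$). If $b>k$, then $N(v)\cap S$ equals exactly the row-portion $\{(a,j):1\leq j\leq k\}$, so $|N(v)\cap S|=k$. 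In both cases the total-domination requirement with multiplicity $k$ is met.

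Next I would verify that $S$ is minimal, using Observation \ref{obs}(ii): it suffices to produce, for each $v\in S$, an $(S,k)$-opn. Since $m\geq k+1$, we can pick any column index $b'\in\{k+1,\dots,m\}$. For $v=(a,b)\in S$ set $v'=(a,b')$; then $v'\notin S$, $v'$ is adjacent to $v$ (they share row $a$), and as noted in the second case above, $N(v')\cap S=\{(a,1),(a,2),\dots,(a,k)\}$ has cardinality exactly $k$ and contains $v$. Thus every vertex of $S$ has an external $k$-open private neighbor with respect to $S$, so $S$ is a minimal $k$TDS.

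There is no real obstacle here; the only thing requiring a little care is the case split above, where one must use the hypothesis $n\geq k+1$ (to handle vertices of $S$ in the first $k$ columns) and the hypothesis $m\geq k+1$ (to have at least one column outside $\{1,\dots,k\}$ from which to draw external private neighbors). Combining the two verifications yields $\Gamma_{\times k,t}(K_n\Box K_m)\geq |S|=kn$, as claimed.
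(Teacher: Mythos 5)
Your proposal is correct and takes exactly the approach the paper does: the paper simply asserts that $\{(i,j)\mid 1\leq i\leq n,\ 1\leq j\leq k\}$ is a minimal $k$TDS of $K_n\Box K_m$ and derives the bound, while you supply the (routine but worthwhile) verification of both the $k$-tuple total domination property and the minimality via external $k$-open private neighbors in the columns beyond the $k$-th. Nothing is missing.
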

As we will see in Proposition \ref{Gamma_{xk,t}(K_{k+1} Box K_{k+1}} for $n=m=k+1\geq 3$, we guess equality holds in Proposition \ref{Gamma_{xk,t}(Kn Box Km >=kn}.
\begin{prop}\label{Gamma_{xk,t}(K_{k+1} Box K_{k+1}}
For any integer $k\geq 2$, $\Gamma_{\times k,t}(K_{k+1}\Box K_{k+1})=k(k+1)$.
\end{prop}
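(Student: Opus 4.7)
The lower bound $\Gamma_{\times k,t}(K_{k+1}\Box K_{k+1})\ge k(k+1)$ is immediate from Proposition~\ref{Gamma_{xk,t}(Kn Box Km >=kn} with $n=m=k+1$, so the entire work is to prove the matching upper bound. The plan is to argue by contradiction: assume some minimal $k$TDS $S$ has $|S|\ge k(k+1)+1$, and produce a vertex of $S$ with no $k$-open private neighbor, contradicting Observation~\ref{obs}(ii).

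Index $V$ as a $(k+1)\times(k+1)$ grid with rows $R_i$ and columns $C_j$, write $a_i=|S\cap R_i|$ and $b_j=|S\cap C_j|$, and set $t=|V\setminus S|\le k$. Because the common value of the two deficits $\sum_i(k+1-a_i)=\sum_j(k+1-b_j)$ is $t<k+1$, some row $R_{i^*}$ and some column $C_{j^*}$ are fully contained in $S$. I would then focus on the forced vertex $v=(i^*,j^*)$: all of its neighbors lie in $R_{i^*}\cup C_{j^*}\subseteq S$, so any witness $v'$ for $v$ is itself in $S$ and must satisfy $a_{i'}+b_{j'}=k+2$. Substituting the value $k+1$ for $a_{i^*}$ or $b_{j^*}$ as appropriate reduces this condition to the dichotomy ``some column $j'\ne j^*$ has $b_{j'}=1$, or some row $i'\ne i^*$ has $a_{i'}=1$''. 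Since $K_{k+1}\Box K_{k+1}$ is invariant under swapping its two factors, I would assume without loss of generality that the second alternative holds, producing a row $i_0$ with $a_{i_0}=1$.

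The deficit at $R_{i_0}$ alone is $k$, so $t=k$ exactly, every row other than $i_0$ is full, and the column profile is pinned down to $b_{j_0}=k+1$ for the unique column $j_0$ with $(i_0,j_0)\in S$ and $b_j=k$ for all other $j$. The final step is to pick any $v^{**}=(i^{**},j^{**})$ with $i^{**}\ne i_0$ and $j^{**}\ne j_0$ (possible because $k\ge 2$, and automatically in $S$) and show that no witness for $v^{**}$ exists: a witness in $R_{i^{**}}$ would force a column sum equal to $1$, absent from $\{k,k+1\}$, while a witness in $C_{j^{**}}$ would force a row sum equal to $0$ or $2$, absent from $\{1,k+1\}$ precisely because $k\ge 2$. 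This contradicts Observation~\ref{obs}(ii) and finishes the argument.

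The main obstacle is the final bookkeeping, and in particular making explicit how the hypothesis $k\ge 2$ is used to rule out the borderline identity $k+1=2$ in the witness equation; the pigeonhole on deficits and the transposition symmetry reduction to the single case ``there is a row with $a=1$'' are both short once the witness equation $a_{i'}+b_{j'}-2s_{i'j'}=k$ has been isolated.
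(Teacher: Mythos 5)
Your proof is correct, and it takes a genuinely more careful route than the paper's, though both share the same skeleton: pigeonhole on the deficit $t=(k+1)^2-|S|\le k$ to find a full row and a full column, then exhibit a vertex of $S$ with no $k$-open private neighbor. The paper, after finding a full row $R_t$ and a full column $C_\ell$, asserts without justification that every row and every column meets $S$ in at least $k$ vertices, and then deletes $(t,\ell)$, claiming the result is still a $k$TDS; that deletion step is exactly the claim that $(t,\ell)$ has no $(S,k)$-opn, and it silently relies on the asserted bound to rule out a line meeting $S$ in a single vertex. That assertion does not follow from $|S|>k(k+1)$ alone (for instance $R_2\cup R_3\cup\{(1,1)\}$ is a $2$TDS of $K_3\,\Box\,K_3$ of size $7$ whose first row meets it only once); it can only come from minimality, which is precisely what your witness equation $a_{i'}+b_{j'}-2s_{i'j'}=k$ applied at $(i^*,j^*)$ extracts. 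Your dichotomy (some line meets $S$ exactly once), the forced profile $a\in\{1,k+1\}$ and $b\in\{k,k+1\}$, and the second vertex $v^{**}$ with no witness close exactly the case the paper's argument glosses over, and they make the role of the hypothesis $k\ge 2$ explicit. In short, your proof is longer but complete where the paper's is not; the paper's is shorter and works verbatim only once one already knows that every line carries at least $k$ vertices of $S$.
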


\begin{proof}
Let $V(K_{k+1}\Box K_{k+1})=\{(i,j)~|~1\leq i,j \leq k+1\}$ in which $k\geq 2$. We know $\Gamma_{\times k,t}(K_{k+1}\Box K_{k+1})\geq k(k+1)$ by Proposition \ref{Gamma_{xk,t}(Kn Box Km >=kn}. Now let 
\[
S=\bigcup_{1\leq i \leq k+1}S_i^r=\bigcup_{1\leq j \leq k+1}S_j^c
\]
be a minimal $k$TDS of $K_{k+1}\Box K_{k+1}$ with cardinality more than $k(k+1)$ in which 
\[
S_i^r=S\cap \{(i,j)~|~1\leq j \leq k+1\},
\]
\[
S_j^c=S\cap \{(i,j)~|~1\leq i \leq k+1\}.
\]
Then $|S_i^r|\geq k$ and $|S_j^c|\geq k$ for each $i$ and each $j$, and also $|S_t^r|= k+1$ and $|S_{\ell}^c|=k+1$ for some $t$ and some $\ell$. Now since $S-\{(t,\ell)\}$ is a $k$TDS of $K_{k+1}\Box K_{k+1}$ whic contradicts the minimality of $S$, we obtain $\Gamma_{\times k,t}(K_{k+1}\Box K_{k+1})=k(k+1)$. See Figure \ref{fi:rook44} for an example.
\end{proof}
\begin{figure}[htp]
\centering
\begin{tikzpicture}
\matrix[nodes={draw, thick, fill=black!20, circle},row sep=0.7cm,column sep=0.7cm] {
  \node[fill=black!80](11){}; &
  \node[fill=black!80](12){}; &
  \node[fill=black!80](13){}; &
  \node(14){}; \\
  \node[fill=black!80](21){}; &
  \node[fill=black!80](22){}; &
  \node[fill=black!80](23){}; &
  \node(24){}; \\
  \node[fill=black!80](31){}; &
  \node[fill=black!80](32){}; &
  \node[fill=black!80](33){}; &
  \node(34){}; \\
  \node[fill=black!80](41){}; &
  \node[fill=black!80](42){}; &
  \node[fill=black!80](43){}; &
  \node(44){}; \\
};
\draw (11) to (12) to (13) to (14); \draw (11) to[bend left] (13); \draw (12) to[bend left] (14); \draw (11) to[bend left] (14);
\draw (21) to (22) to (23) to (24); \draw (21) to[bend left] (23); \draw (22) to[bend left] (24); \draw (21) to[bend left] (24);
\draw (31) to (32) to (33) to (34); \draw (31) to[bend left] (33); \draw (32) to[bend left] (34); \draw (31) to[bend left] (34);
\draw (11) to (21) to (31) to[bend left] (11);  \draw (12) to (22) to (32) to[bend left] (12);  \draw (13) to (23) to (33) to[bend left] (13);  \draw (14) to (24) to (34) to[bend left] (14);
\draw (41) to (42) to (43) to (44); \draw (41) to[bend left] (43); \draw (42) to[bend left] (44); \draw (41) to[bend left] (44);
\draw (31) to (41) to [bend left] (21); \draw (41) to[bend left] (11); 
\draw (32) to (42) to [bend left] (22); \draw (42) to[bend left] (12); 
\draw (33) to (43) to [bend left] (23); \draw (43) to[bend left] (13);
\draw (34) to (44) to [bend left] (24); \draw (44) to[bend left] (14); 

\end{tikzpicture}
\caption{The dark vertices highlight a minimal $3$TDS of $K_4 \Box K_4$ with maximum cardinality.}\label{fi:rook44}
\end{figure}


In 1963, more formally in 1968, Vizing \cite{Viz} made an elegant conjecture that has subsequently become one the most famous open problems in domination theory.

\begin{conj}[Vizing's Conjecture]
For any graphs $G$ and $H$, 
\[
\gamma(G)\cdot \gamma(H) \leq \gamma(G\Box H).
\]
\end{conj}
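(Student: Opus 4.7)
The plan is to attempt Vizing's conjecture by the double-projection strategy originating with Clark and Suen. Let $D$ be a minimum dominating set of $G\Box H$, so $|D|=\gamma(G\Box H)$. For each $g\in V(G)$ set the fiber $D_g=\{h:(g,h)\in D\}$, and for each $h\in V(H)$ set the column $D^h=\{g:(g,h)\in D\}$, so that $|D|=\sum_g|D_g|=\sum_h|D^h|$. The idea is to compare this total mass against $\gamma(G)\gamma(H)$ by analyzing simultaneously how $D$ projects to $G$ and to $H$.

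The main step is to fix a minimum dominating set $A$ of $G$ with $|A|=\gamma(G)$ together with a partition $V(G)=\bigsqcup_{a\in A}V_a$ where $V_a\subseteq N_G[a]$, and then, for each $a\in A$ and each $h\in V(H)$, to analyse how the pair $(a,h)$ is dominated inside $G\Box H$. A dominator $(g,h')\in D$ of $(a,h)$ must satisfy either $g=a$ and $h'\in N_H[h]$ (so $h'\in D_a$ helps dominate $h$ inside $H$), or $h'=h$ and $g\in N_G[a]$ (so $h\in D^g$ for some $g\in N_G[a]$). Classifying each $h\in V(H)$ as \emph{fiber-type} or \emph{column-type} relative to $a$ according to which option must occur, and summing over $a\in A$, should deliver a lower bound of the form $\sum_{a\in A}(|D_a|+\text{column contribution}_a)\ge\gamma(G)\gamma(H)$.

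The main obstacle --- and precisely the reason Vizing's conjecture has resisted attack since 1968 --- is the double-counting inherent in this bookkeeping: a single vertex $(g,h)\in D$ can serve simultaneously as fiber mass for the class $V_a$ with $g\in V_a$ and as column mass for every class $V_{a'}$ with $a'\in N_G(g)$, so naive consolidation loses a factor of two and produces only the Clark--Suen bound $\gamma(G\Box H)\ge\tfrac12\gamma(G)\gamma(H)$. A realistic refinement is to design an unambiguous charging that assigns every $(g,h)\in D$ to exactly one role (fiber or column) while still paying for every domination constraint, and then to show that the fiber mass alone over each $V_a$ already dominates $H$. I expect this to be the hard step; for restricted graph classes (trees, cycles, chordal graphs, graphs whose $\gamma$-set forms a $2$-packing) such a charging is known to exist, so one natural route is to abstract a common structural feature of those successful proofs. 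An alternative route is to consider a minimum counterexample $(G,H)$ with $|V(G)|+|V(H)|$ minimized and extract enough forbidden substructures --- absence of pendant vertices, efficient dominating sets, or suitable private neighbours --- to force a contradiction.
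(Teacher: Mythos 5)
There is a genuine gap, and it is the one you yourself flag: the statement you are asked to prove is Vizing's Conjecture, which the paper (correctly) presents only as a conjecture --- it has been open since 1968 and the paper offers no proof, merely citing it as motivation and recalling the Clark--Suen bound $\gamma(G)\cdot\gamma(H)\leq 2\gamma(G\,\Box\,H)$. Your proposal reproduces the Clark--Suen double-projection framework accurately (fibers $D_g$, columns $D^h$, a partition $V(G)=\bigsqcup_{a\in A}V_a$ with $V_a\subseteq N_G[a]$, and a fiber-type/column-type classification of the pairs $(a,h)$), but the argument as written terminates exactly at the point where a proof would have to begin. The ``unambiguous charging that assigns every $(g,h)\in D$ to exactly one role while still paying for every domination constraint'' is not a step you carry out; it is a description of what a proof would need to accomplish, and no mechanism for constructing such a charging is given. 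Without it, the bookkeeping you describe yields only the factor-$2$ inequality, not the conjectured one.

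Concretely, the failure point is this: when $h$ is column-type for $a$, the dominating vertex $(g,h)$ with $g\in N_G[a]$ may simultaneously be needed as fiber mass for the class $V_{a'}$ containing $g$, and your proposal offers no rule that prevents this double use nor any argument that the fiber mass over each $V_a$, after an unambiguous assignment, still dominates $H$. The two fallback routes you mention (abstracting structure from the known special cases, or a minimum-counterexample analysis) are likewise only directions, with no forbidden substructure actually extracted and no contradiction derived. This is not a criticism of your understanding --- the outline is a faithful account of the state of the art --- but it is not a proof of the statement, and no proof should be expected here: the correct resolution is to recognize that the statement is an open conjecture, which is exactly how the paper treats it.
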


Over more than fifty years (see \cite{BDG} and references therein), Vizing's Conjecture has been shown to hold for certain restricted classes of graphs, and furthermore, upper and lower bounds on the inequality have gradually tightened.  Additionally, research has explored inequalities (including Vizing-like inequalities) for different forms of domination \cite{hhs2}. A significant breakthrough occurred in 2000, when Clark and Suen \cite{CS1} proved that 
\[
\gamma (G)\cdot  \gamma (H)\leq 2\gamma (G\Box H)
\]
which led to the discovery of a Vizing-like inequality for total domination \cite{HR,HPT}, i.e.,
\begin{equation}\label{eq:HRbound}
\gamma_{t}(G)\cdot \gamma_{t}(H) \leq 2 \gamma_{t}(G \Box H),
\end{equation}
as well as for paired \cite{BHRo,CM,HJ}, and fractional domination \cite{FRDM}, and the $\{k\}$-domination function (integer domination) \cite{BM,CMH,HL}, and total $\{k\}$-domination function \cite{HL}. In 1996, Nowakowski and Rall
in \cite{NR} made the following Vizing-like conjecture for the upper
domination of Cartesian products of graphs.

\begin{conj}[Nowakowski-Rall's Conjecture] \label{Nowakowski-Rall's Conjecture}
For any graphs $G$ and $H$, 
\[
\Gamma (G)\cdot \Gamma (H)\leq \Gamma (G\, \Box \, H).
\]
\end{conj}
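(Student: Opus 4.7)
The plan is to imitate the Clark--Suen partition argument \cite{CS1} and try to push it from the lower domination number $\gamma$ to the upper domination number $\Gamma$. Fix $\Gamma$-sets $D_G\subseteq V(G)$ and $D_H\subseteq V(H)$, so that $|D_G|=\Gamma(G)$ and $|D_H|=\Gamma(H)$, and try to construct a \emph{minimal} dominating set $D^{\star}$ of $G\,\Box\,H$ with $|D^{\star}|\geq |D_G|\cdot|D_H|$. The naive candidate $D_G\times D_H$ does not even dominate $G\,\Box\,H$: a vertex $(g,h)$ with $g\notin D_G$ and $h\notin D_H$ has no neighbor in $D_G\times D_H$, because adjacency in the Cartesian product changes exactly one coordinate. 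Thus a genuine construction is unavoidable.

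The natural route is to exploit minimality witnesses. For each $v\in D_G$ fix a private neighbor $p_G(v)\in V(G)$ with $N_G[p_G(v)]\cap D_G=\{v\}$, and analogously $p_H$ on $D_H$. View $V(G\,\Box\,H)$ as a grid whose rows are $V(G)\times\{h\}$ and whose columns are $\{g\}\times V(H)$, and install along each row and column a shifted copy of the appropriate $\Gamma$-set. Then prune carefully so that every vertex of the product is still dominated and every surviving selected vertex retains a product-level private neighbor -- either inside its row via $p_H$, or inside its column via $p_G$. The target is a charging argument that associates a unique survivor of $D^{\star}$ to each pair $(v,w)\in D_G\times D_H$, yielding $|D^{\star}|\geq |D_G|\cdot|D_H|$.

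The real obstacle is the simultaneous preservation of domination, minimality, and the product lower bound on the cardinality. Row and column witnesses of minimality interfere with one another: a vertex that appears privately claimed by its row can turn out to be multiply covered from its column, enabling the constructed set to be pruned below $\Gamma(G)\cdot\Gamma(H)$. This tension is precisely why the displayed inequality is the Nowakowski--Rall conjecture of 1996 \cite{NR}; after many years of work it remains open in general and has only been verified for restricted classes of graphs \cite{BDG}. A realistic plan would therefore begin with such restricted classes -- for example graphs satisfying $\Gamma(G)=\gamma(G)$, where the Clark--Suen inequality already supplies a factor-two version of the bound -- and only then attempt to lift the argument by patching the pieces together.
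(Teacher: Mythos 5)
Your proposal does not prove the statement: it sketches a Clark--Suen-style partition argument, explains why the row/column private-neighbour bookkeeping interferes, and then concludes that the inequality ``remains open in general.'' That is a concession, not a proof, so there is nothing here that establishes the displayed inequality. Moreover, the concluding claim is factually wrong in the context of this paper: the statement is recorded here as the Nowakowski--Rall conjecture of 1996 \cite{NR}, and the paper immediately notes that it was \emph{proved} by Bre\v{s}ar \cite{Br} in 2005. You appear to have conflated it with Vizing's conjecture for ordinary domination (which is indeed still open and is the subject of the survey \cite{BDG}); the upper-domination version is settled. The paper itself gives no proof and treats the result as known background motivating its own Conjecture~\ref{Adel's Conj.}.

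On the mathematics of your sketch: your diagnosis that $D_G\times D_H$ fails even to dominate $G\,\Box\,H$ is correct, and the tension between row and column minimality witnesses is real. But the successful argument is not a Clark--Suen charging argument (that route inherently loses a factor of $2$, as it does for $\gamma$ and $\gamma_t$ in \cite{CS1,HR}). Bre\v{s}ar instead exploits the structure of a minimal dominating set $D$: each $v\in D$ either has an external private neighbour or is its own private neighbour (isolated in $G[D]$), and from this dichotomy one builds a single minimal dominating set of the product of size at least $|D_G|\cdot|D_H|$. The accessible special case with your toolkit is exactly the one modelled by the paper's Theorem~\ref{Gamma xkt GH>=Gamma xkt G.Gamma xkt H}: if every vertex of a $\Gamma(G)$-set $D_G$ has an \emph{external} private neighbour $v'$, then $D_G\times V(H)$ is a minimal dominating set of $G\,\Box\,H$ (the vertex $(v',w)$ is privately dominated only by $(v,w)$, since $v'\notin D_G$ kills the column contribution), giving $\Gamma(G\,\Box\,H)\geq \Gamma(G)\cdot|V(H)|\geq\Gamma(G)\cdot\Gamma(H)$. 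The genuinely hard part, which your plan does not address, is handling the vertices of $D_G$ that are their own private neighbours.
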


A beautiful proof of the Nowakowski-Rall's Conjecture was recently
found by Bre\v{s}ar \cite{Br}. Also Paul Dorbec et al. in \cite{DHR}\ proved that for any graphs $G$ and
$H$ with no isolated vertices, 
\begin{equation}\label{eq:DHRbound}
\Gamma _{t}(G)\cdot \Gamma _{t}(H)\leq 2\Gamma _{t}(G\, \Box \, H),
\end{equation}

We guess (\ref{eq:DHRbound}) can be extended as follows: 

\begin{conj} 
\label{Adel's Conj.} \emph{(\textbf{Vizing-like conjecture for upper $k$-tuple total domination})}\\
For any integer $k \geq 2$ and any graphs $G$ and $H$ with minimum degrees at least $k$,
\[
\Gamma_{\times k,t}(G) \cdot \Gamma_{\times k,t}(H) \leq \frac{k+1}{k} \cdot \Gamma_{\times k,t} (G \, \Box \,  H).
\]
\end{conj}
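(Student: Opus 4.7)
The plan is to extend the Dorbec--Henning--Rall argument behind (\ref{eq:DHRbound}), which settles the $k=1$ case where the target constant $\frac{k+1}{k}$ specializes to $2$. Their proof is constructive: from $\Gamma_{t}$-sets of $G$ and $H$ they assemble a minimal TDS of $G\,\Box\,H$ whose size delivers the bound. The conjectured sharpening from $2$ down to $\frac{k+1}{k}$ as $k$ grows must come from the extra redundancy that every $k$TDS carries when $k\geq 2$, and the construction should exploit this slack.

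First I would fix $\Gamma_{\times k,t}$-sets $D_G$ of $G$ and $D_H$ of $H$, with $|D_G|=a$ and $|D_H|=b$; by Observation~\ref{obs} (ii) every vertex of $D_G$ admits a $(D_G,k)$-opn, and likewise in $H$. The natural candidate $D_G\times D_H\subseteq V(G\,\Box\,H)$ has the target cardinality $ab$. A direct check shows that for every vertex $(g,h)$ with $g\in D_G$ or $h\in D_H$ one has $|N((g,h))\cap(D_G\times D_H)|\geq k$, whereas a ``corner'' vertex $(g,h)$ with $g\notin D_G$ and $h\notin D_H$ has no neighbor in $D_G\times D_H$. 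I would therefore augment by a correction set $T$ that $k$-covers the corners, e.g.\ by adding, for each $g\notin D_G$, a small bundle of vertices in $\{g\}\times D_H$ chosen so that each corner in row $g$ receives its $k$ neighbors with as little duplication across different $g$ as possible.

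Next I would trim $S:=(D_G\times D_H)\cup T$ to a minimal $k$TDS $S^\ast$ of $G\,\Box\,H$. The crucial quantitative input is that each vertex $(g,h)\in D_G\times D_H$ already enjoys at least $2k$ neighbors inside $D_G\times D_H$, namely
\[
|N_G(g)\cap D_G|+|N_H(h)\cap D_H|\geq k+k=2k,
\]
so the $k$-covering property survives considerable deletions. A discharging argument that assigns to each deleted vertex a ``slack certificate'' drawn from this surplus, and to each retained vertex a $(S^\ast,k)$-opn, should bound the number of deletions from $D_G\times D_H$ by $\tfrac{ab}{k+1}$, yielding
\[
|S^\ast|\;\geq\;ab-\tfrac{ab}{k+1}\;=\;\tfrac{k}{k+1}\,\Gamma_{\times k,t}(G)\cdot\Gamma_{\times k,t}(H),
\]
which rearranges to the conjectured inequality.

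The main obstacle will be this trimming step: it must be tight enough to realize the sharp $\frac{k+1}{k}$ constant rather than the weaker $2$ inherited from the $k=1$ case, and the correction vertices in $T$ may themselves force deletions inside $D_G\times D_H$, coupling the two halves of the construction. Verifying minimality in the sense of Observation~\ref{obs} (ii) for every retained vertex, not merely the $k$-covering property, is the most delicate bookkeeping. As a first test of the plan I would verify the conjecture in the regular regime (both $G$ and $H$ being $k$-regular, where Proposition~\ref{remark2} gives $\Gamma_{\times k,t}(G)=n(G)$ and the product has clean symmetries), and for products of complete multipartite graphs via Theorem~\ref{Gamma xkt complete multipartite}; the rook's-graph computations in Propositions~\ref{Gamma_{xk,t}(Kn Box Km >=kn} and \ref{Gamma_{xk,t}(K_{k+1} Box K_{k+1}} already supply consistency checks at $n=m=k+1$.
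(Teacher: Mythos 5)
This statement is presented in the paper as an open conjecture, not a theorem: the author proves it only for the restricted family of $\Gamma_{\times k,t}$-external graphs (Theorems~\ref{Gamma xkt GH>=Gamma xkt G.Gamma xkt H} and~\ref{Conj.for.external graph}), and there via a quite different construction, namely $D_G\times V(H)$ (a full ``prism'' over a $\Gamma_{\times k,t}(G)$-set), which is already minimal when $G$ is external and yields the much stronger bound $\Gamma_{\times k,t}(G\,\Box\,H)\geq \Gamma_{\times k,t}(G)\cdot |V(H)|\geq \Gamma_{\times k,t}(G)\cdot\Gamma_{\times k,t}(H)$. Your proposal is a research plan rather than a proof, and its decisive step is missing. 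Everything up to and including the observation that each vertex of $D_G\times D_H$ has at least $2k$ neighbours inside $D_G\times D_H$ is correct, but the claim that a discharging argument ``should'' bound the number of deletions in the trimming phase by $ab/(k+1)$ is pure assertion; no certificate scheme is defined, and this is exactly where the difficulty of the conjecture lives. Note also that to lower-bound $\Gamma_{\times k,t}(G\,\Box\,H)$ you must exhibit one \emph{minimal} $k$TDS of the required size; an arbitrary maximal deletion sequence from $(D_G\times D_H)\cup T$ terminates in a minimal set, but nothing prevents it from being small, so you would need to prove that some deletion order loses at most a $\tfrac{1}{k+1}$ fraction, which is not argued.

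A concrete obstruction to the discharging idea as stated: by minimality of $D_G$ there are vertices $g'\notin D_G$ with $|N_G(g')\cap D_G|=k$ exactly, so every ``side'' vertex $(g',h)$ with $h\in D_H$ has no surplus at all --- its only in-set neighbours are $(N_G(g')\cap D_G)\times\{h\}$ --- and these freeze $k$ vertices of row $h$ against deletion independently of the $2k$ surplus enjoyed by vertices of $D_G\times D_H$; symmetrically for columns. The interaction of these frozen vertices with the correction set $T$ (whose members must themselves acquire $(S^\ast,k)$-opns or be deleted, possibly cascading back into $D_G\times D_H$) is precisely the coupling you flag as an obstacle, and it is not resolved. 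Your closing consistency checks are sound --- in particular $K_{k+1}\,\Box\,K_{k+1}$ shows via Propositions~\ref{remark2} and~\ref{Gamma_{xk,t}(K_{k+1} Box K_{k+1}} that the constant $\tfrac{k+1}{k}$ would be best possible --- but they do not advance the general case. As it stands the proposal neither proves the conjecture nor recovers the paper's partial result for external graphs.
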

Let $G_1$, $G_2$, $\cdots$, $G_n$  and $H_1$, $H_2$, $\cdots$, $H_m$ be respectively the all connected components of two graphs $G$ and $H$ which have minimum degrees at least $k\geq 2$. Then $G\Box H$ is a disconnected graph with the connected components $G_i\Box H_j$ for $1\leq i \leq n$ and $1\leq j\leq m$. By the truth of Conjecture \ref{Adel's Conj.} for connected graphs, since
\[
\begin{array}{lll}
\Gamma_{\times k,t}(G\Box H) & = & \sum_{1\leq i \leq n}\sum_{1\leq j \leq m} \Gamma_{\times k,t}(G_i\Box H_j) \\
& \geq & \sum_{1\leq i \leq n}\sum_{1\leq j \leq m} \frac{k}{k+1}\cdot \Gamma_{\times k,t}(G_i)\cdot \Gamma_{\times k,t}(H_j) \\
& = &  \frac{k}{k+1}\cdot (\sum_{1\leq i \leq n}\Gamma_{\times k,t}(G_i))\cdot (\sum_{1\leq j \leq m}\Gamma_{\times k,t}(H_j)) \\
& = &  \frac{k}{k+1}\cdot \Gamma_{\times k,t}(G))\cdot \Gamma_{\times k,t}(H) 
\end{array}
\]
we may conclude that Conjecture \ref{Adel's Conj.} is true for disconnected graphs. Proposition \ref{Gamma_{xk,t}(K_{k+1} Box K_{k+1}} shows the bound in Conjecture \ref{Adel's Conj.}, if true, is best possible. Theorem \ref{Conj.for.external graph}, which is obtained by Theorem \ref{Gamma xkt GH>=Gamma xkt G.Gamma xkt H}, shows that Conjecture \ref{Adel's Conj.} is true for a family of graphs.


\begin{thm} \label{Gamma xkt GH>=Gamma xkt G.Gamma xkt H}
For any two $\Gamma_{\times k,t}$-external graphs $G$ and $H$ with minimum degree at least $k\geq 2$,
\[
\Gamma _{\times k,t}(G\, \Box \, H)\geq \max \{\Gamma _{\times
k,t}(G)\cdot |V(H)| ,\Gamma _{\times k,t}(H)\cdot|V(G)| \}.
\]
\end{thm}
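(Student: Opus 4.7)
The plan is to exhibit, inside $G\,\Box\, H$, a minimal $k$TDS of size $\Gamma_{\times k,t}(G)\cdot|V(H)|$ (and symmetrically one of size $\Gamma_{\times k,t}(H)\cdot|V(G)|$) built as a product of a $\Gamma_{\times k,t}$-set in one factor with the full vertex set of the other. Let $S$ be a $\Gamma_{\times k,t}$-set of $G$ witnessing that $G$ is $\Gamma_{\times k,t}$-external, so that for every $g\in S$ there is a vertex $g^{\ast}(g)\in V(G)\setminus S$ with $g\in N_G(g^{\ast}(g))$ and $|N_G(g^{\ast}(g))\cap S|=k$. Set $T=S\times V(H)\subseteq V(G\,\Box\, H)$.

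First I would verify that $T$ is a $k$TDS of $G\,\Box\, H$. A vertex $(g,h)$ has $G$-layer neighbors $(g',h)$ with $g'\in N_G(g)$, and among these exactly $|N_G(g)\cap S|\ge k$ lie in $T$, because $S$ is a $k$TDS of $G$. This immediately gives the $k$-domination condition, regardless of whether $(g,h)$ lies in $T$ or not; the $H$-layer neighbors are an additional bonus when $g\in S$ and can be ignored.

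The next step, which is the heart of the argument, is to show minimality via Observation \ref{obs}(ii): for every $(g,h)\in T$ one must produce a $k$-open private neighbor with respect to $T$. The natural candidate is $(g^{\ast}(g),h)$, which is adjacent to $(g,h)$ in $G\,\Box\, H$ since $g\in N_G(g^{\ast}(g))$. Its neighbors in $T$ split into two types: the $G$-layer neighbors $(g'',h)$ with $g''\in N_G(g^{\ast}(g))\cap S$, contributing exactly $k$ by choice of $g^{\ast}(g)$; and the $H$-layer neighbors $(g^{\ast}(g),h')$, which contribute nothing to $T$ because $g^{\ast}(g)\notin S$. Consequently $|N_{G\Box H}((g^{\ast}(g),h))\cap T|=k$ and this common neighborhood contains $(g,h)$, so $(g^{\ast}(g),h)$ is indeed a $(T,k)$-opn of $(g,h)$. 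Applying Observation \ref{obs}(ii) then certifies that $T$ is a minimal $k$TDS of $G\,\Box\, H$, giving
\[
\Gamma_{\times k,t}(G\,\Box\, H)\ge |T|=|S|\cdot|V(H)|=\Gamma_{\times k,t}(G)\cdot|V(H)|.
\]
Swapping the roles of $G$ and $H$ (using that $H$ is also $\Gamma_{\times k,t}$-external) and the symmetry of the Cartesian product yields the analogous bound $\Gamma_{\times k,t}(H)\cdot|V(G)|$; taking the maximum of the two bounds completes the proof.

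The only point requiring real attention is the neighbor count of $(g^{\ast}(g),h)$ in $T$. This is where the \emph{external} hypothesis does all the work: if $g^{\ast}(g)$ were allowed to lie in $S$, then the entire $H$-layer $\{(g^{\ast}(g),h'):h'\in N_H(h)\}$ would fall inside $T$, inflating the count well above $k$ and destroying the private-neighbor property. Once one sees that subtlety, the rest is a routine verification.
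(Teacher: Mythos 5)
Your proposal is correct and follows essentially the same route as the paper: take the product $S\times V(H)$ of an external $\Gamma_{\times k,t}$-set with the full vertex set of the other factor, check the $k$-domination via the $G$-layer, and certify minimality by showing that $(g^{\ast}(g),h)$ is a $(T,k)$-opn of $(g,h)$, where the externality of $g^{\ast}(g)$ kills the $H$-layer contribution to the neighbor count. The paper handles only the larger of the two products (assumed without loss of generality) while you derive both bounds symmetrically, but this is an immaterial difference.
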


\begin{proof} 
Let $G$ and $H$ be two  $\Gamma_{\times k,t}$-external graphs with minimum degree at least $k\geq 2$, and let $\Gamma _{\times k,t}(G)\cdot |V(H)|=\max \{\Gamma _{\times k,t}(G)\cdot |V(H)| ,\Gamma _{\times
k,t}(H)\cdot|V(G)| \}$. Assume $D_{G}$ is a $\Gamma _{\times k,t}(G)$-set in which every
vertex of it has an external ($D_{G},k$)-opn. Obviousely, $D=D_{G}\times V(H)$ is a $k$TDS of $G\, \Box \, H$. To show that $D$ is minimal, it is sufficient to prove 
\[
opn_{k}((v,w);D)\neq \emptyset  \mbox{ for any vertex } (v,w)\in D.
\]
Let
$v^{\prime }\in opn_{k}(v;D_{G})\cap (V(G)-D_{G})$. Then
$N_{G}(v^{\prime })\cap
D_{G}=\{v,v_{1},v_{2},...,v_{k-1}\}$ for some vertices $v_{1},v_{2},...,v_{k-1}\in D_{G}$, and so
\[
\begin{array}{lll}
N_{G\, \Box \, H}((v^{\prime },w))\cap D & = & ((N_{G}(v^{\prime
})\times
\{w\})\cup (\{v^{\prime }\}\times N_{H}(w))\cap D \\
& = & (N_{G}(v^{\prime })\cap D_{G})\times \{w\}\cup (\emptyset \times
N_{H}(w) \\
& = & \{(v,w),(v_{1},w),(v_{2},w),...,(v_{k-1},w)\},
\end{array}
\]
which implies $opn_{k}((v,w);D)\neq \emptyset $ for every vertex $(v,w)\in D$. Hence
\[
\begin{array}{lll}
\Gamma _{\times k,t}(G\, \Box \, H) & \geq & | D| \\
& \geq & \Gamma _{\times k,t}(G)\cdot | V(H)|\\
&  =   & \max \{\Gamma _{\times k,t}(G)\cdot |V(H)| ,\Gamma _{\times k,t}(H)\cdot|V(G)| \}.
\end{array}
\]
\end{proof}

\begin{thm}\label{Conj.for.external graph}
Let $G$ be a $\Gamma_{\times k,t}$-external graph with $\delta (G)
\geq k\geq 2$. Then for any graph $H$ with $\delta (H) \geq k$,
\[
\Gamma _{\times k,t}(G\, \Box \, H)\geq \Gamma _{\times
k,t}(G)\cdot\Gamma _{\times k,t}(H).
\]
\end{thm}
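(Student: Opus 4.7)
The plan is to reuse the construction from the proof of Theorem~\ref{Gamma xkt GH>=Gamma xkt G.Gamma xkt H}, observing that its minimality argument only requires one of the two factors to be $\Gamma_{\times k,t}$-external; the other factor can be completely arbitrary (subject only to $\delta\geq k$). Concretely, I would pick a $\Gamma_{\times k,t}(G)$-set $D_G$ in which every $v\in D_G$ admits an external $k$-opn $v'\in V(G)\setminus D_G$, and take
\[
D \;=\; D_G\times V(H)
\]
as my candidate minimal $k$TDS of $G\,\Box\, H$.

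First I would check that $D$ is a $k$TDS: for every $(u,w)\in V(G\,\Box\, H)$, the neighbors of $(u,w)$ in $D$ include $(N_G(u)\cap D_G)\times\{w\}$, which has size at least $k$ because $D_G$ is a $k$TDS of $G$. Next I would verify minimality through Observation~\ref{obs}(ii) by exhibiting an opn for each $(v,w)\in D$. Writing $N_G(v')\cap D_G=\{v,v_1,\dots,v_{k-1}\}$ for the external opn $v'$ of $v$ with respect to $D_G$, and noting that $v'\notin D_G$ forces the entire $H$-fiber $\{v'\}\times N_H(w)$ to lie outside $D$, one obtains
\[
N_{G\Box H}\bigl((v',w)\bigr)\cap D \;=\; \{(v,w),(v_1,w),\dots,(v_{k-1},w)\},
\]
so $(v',w)$ is an (external) $k$-opn of $(v,w)$ with respect to $D$. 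This is exactly the computation already carried out in the proof of Theorem~\ref{Gamma xkt GH>=Gamma xkt G.Gamma xkt H}, so I would invoke it rather than repeat it.

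Finally, combining $|D|=\Gamma_{\times k,t}(G)\cdot|V(H)|$ with the trivial bound $\Gamma_{\times k,t}(H)\leq |V(H)|$ from Observation~\ref{obs}(i) yields
\[
\Gamma_{\times k,t}(G\,\Box\, H) \;\geq\; |D| \;=\; \Gamma_{\times k,t}(G)\cdot |V(H)| \;\geq\; \Gamma_{\times k,t}(G)\cdot \Gamma_{\times k,t}(H),
\]
which is the desired inequality. There is no real obstacle here: the work was already done in Theorem~\ref{Gamma xkt GH>=Gamma xkt G.Gamma xkt H}, and the only point to flag is that one never needs $H$ itself to be $\Gamma_{\times k,t}$-external — replacing $\Gamma_{\times k,t}(H)$ by the coarser upper bound $|V(H)|$ is exactly what makes the argument go through for an arbitrary $H$ with $\delta(H)\geq k$.
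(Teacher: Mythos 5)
Your proposal is correct and is exactly the derivation the paper intends: it states that Theorem~\ref{Conj.for.external graph} "is obtained by" Theorem~\ref{Gamma xkt GH>=Gamma xkt G.Gamma xkt H}, whose proof already builds $D=D_G\times V(H)$ and only uses the externality of $G$, so the conclusion follows from $\Gamma_{\times k,t}(G\,\Box\,H)\geq\Gamma_{\times k,t}(G)\cdot|V(H)|\geq\Gamma_{\times k,t}(G)\cdot\Gamma_{\times k,t}(H)$. Your observation that $H$ need not be $\Gamma_{\times k,t}$-external is precisely the point being exploited.
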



The proof of Theorem \ref {Gamma xkt GH>=Gamma xkt G.Gamma xkt H} with Proposition \ref{remark2} and Theorem
\ref{Gamma xkt=<n-1 or =<n-delta+k} imply next theorem.

\begin{thm}  \label{Gamma xkt GH>=max{Gamma xkt G,(Gamma xkt H +1)}}
Let $G$ be a $\Gamma_{\times k,t}$-external graph, and let $H$ be an
arbitrary graph. Then the following statements hold.

\verb"i." If $\delta (H)\geq k+1$, then $\Gamma _{\times k,t}(G\,
\Box \, H)\geq \Gamma _{\times k,t}(G)(\Gamma _{\times
k,t}(H)+\delta (H)-k)$.

\verb"ii." If $H$ is $k$-regular, then $\Gamma _{\times k,t}(G\,
\Box \, H)\geq \Gamma _{\times k,t}(G) \cdot \Gamma _{\times k,t}(H)$.

\verb"iii." If $H$ is not $k$-regular and $\delta (H)=k$, then
$\Gamma _{\times k,t}(G\, \Box \, H)\geq \Gamma _{\times
k,t}(G)(\Gamma _{\times k,t}(H)+1)$.
\end{thm}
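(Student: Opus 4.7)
The plan is to extract from the proof of Theorem \ref{Gamma xkt GH>=Gamma xkt G.Gamma xkt H} the one-sided estimate
\[
\Gamma_{\times k,t}(G \,\Box\, H) \;\geq\; \Gamma_{\times k,t}(G) \cdot |V(H)|,
\]
which is exactly what the construction $D = D_G \times V(H)$ in that proof delivers once $D_G$ is a $\Gamma_{\times k,t}$-set of $G$ whose vertices all admit external $k$-open private neighbors; the verification that $D$ is a minimal $k$TDS of $G \,\Box\, H$ uses $H$ only through the assumption $\delta(H) \geq k$, so neither $k$-regularity nor the $\Gamma_{\times k,t}$-external property is needed on $H$. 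With this inequality in hand, each of (i)--(iii) reduces to rewriting the factor $|V(H)|$ in terms of $\Gamma_{\times k,t}(H)$ under the hypothesis at hand.

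For part (ii), Proposition \ref{remark2} applied to the $k$-regular graph $H$ gives $\Gamma_{\times k,t}(H) = |V(H)|$, and the desired bound is immediate. For part (i), the hypothesis $\delta(H) \geq k+1$ puts $H$ within the scope of Theorem \ref{Gamma xkt=<n-1 or =<n-delta+k}, which yields $\Gamma_{\times k,t}(H) \leq |V(H)| - \delta(H) + k$, equivalently $|V(H)| \geq \Gamma_{\times k,t}(H) + \delta(H) - k$; multiplying this through by $\Gamma_{\times k,t}(G)$ is then all that is left.

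The main obstacle is part (iii), where $\delta(H) = k$ places $H$ just outside the hypothesis of Theorem \ref{Gamma xkt=<n-1 or =<n-delta+k}, so the strict inequality $\Gamma_{\times k,t}(H) < |V(H)|$ must be recovered by another route. My plan is to argue directly that non-$k$-regularity forces $\Gamma_{\times k,t}(H) \leq |V(H)| - 1$, equivalently that $V(H)$ is not a minimal $k$TDS of $H$; by Observation \ref{obs}(ii) this reduces to exhibiting a vertex of $H$ whose $k$-open private neighborhood with respect to $V(H)$ is empty. Such a vertex should be located by combining the presence of a vertex of degree strictly greater than $k$, guaranteed by non-$k$-regularity, with the structural constraint supplied by Observation \ref{obs}(iii) on the neighbors of vertices of degree exactly $k$. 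Once $|V(H)| \geq \Gamma_{\times k,t}(H) + 1$ has been secured, the desired bound $\Gamma_{\times k,t}(G \,\Box\, H) \geq \Gamma_{\times k,t}(G)(\Gamma_{\times k,t}(H) + 1)$ follows from the one-sided estimate above by a single multiplication by $\Gamma_{\times k,t}(G)$.
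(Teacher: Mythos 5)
Your overall strategy is exactly the paper's: the theorem is proved there in one sentence by combining the bound $\Gamma_{\times k,t}(G\,\Box\,H)\geq \Gamma_{\times k,t}(G)\cdot|V(H)|$ extracted from the proof of Theorem~\ref{Gamma xkt GH>=Gamma xkt G.Gamma xkt H} with Proposition~\ref{remark2} and Theorem~\ref{Gamma xkt=<n-1 or =<n-delta+k}, which is precisely your reduction. Your handling of parts (i) and (ii) is correct and identical to the paper's, and you are right that only $G$ needs to be $\Gamma_{\times k,t}$-external for the construction $D=D_G\times V(H)$ to yield a minimal $k$TDS of the product.

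The problem is part (iii), and you have put your finger on exactly the spot the paper glosses over: one needs $|V(H)|\geq \Gamma_{\times k,t}(H)+1$, i.e., that $V(H)$ is not a minimal $k$TDS of $H$. But your plan to deduce this from non-$k$-regularity cannot be completed, because the claim is false for $k\geq 2$. By Observation~\ref{obs}(ii), $V(H)$ is a minimal $k$TDS of $H$ if and only if every vertex of $H$ has a neighbor of degree exactly $k$ (since $opn_{k}(v;V(H))$ is precisely the set of neighbors of $v$ having degree $k$), and this condition does not force $k$-regularity. For instance, take $k=2$ and let $H$ be the cycle $1,2,3,4,5,6$ with the chord $14$ added: then $\delta(H)=2$, $H$ is not $2$-regular (vertices $1$ and $4$ have degree $3$), yet every vertex has a neighbor of degree $2$, so $V(H)$ is a minimal $2$TDS and $\Gamma_{\times 2,t}(H)=|V(H)|$. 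For such an $H$ your argument (and the paper's) delivers only $\Gamma_{\times 2,t}(G)\cdot\Gamma_{\times 2,t}(H)$, not the claimed $\Gamma_{\times 2,t}(G)\bigl(\Gamma_{\times 2,t}(H)+1\bigr)$. (Your sketch does work for $k=1$: there, every degree-one vertex would itself need a degree-one neighbor, forcing $H$ to be a disjoint union of $K_2$'s and hence $1$-regular.) So part (iii) requires either an extra hypothesis --- e.g., that some vertex of $H$ has no neighbor of degree $k$ --- or a genuinely different construction; as written, the step fails, and this gap is inherited from, not introduced by, your proposal.
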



\section{The cross product of graphs}
\vskip0.2 true cm

In this section, we study the upper $k$-tuple total domination
number of the cross product of two graphs. First we recall that the \emph{cross product} (also known as
the \emph{direct product}, \emph{tensor product},  \emph{categorical
product}, and \emph{conjunction} in the literature) $G \times H$ has
$V(G)\times V(H)$ as vertex set and two vertices $(g_1,h_1)$ and
$(g_2,h_2)$ are adjacent if and only if $(g_1,g_2)\in E(G)$ and
$(h_1,h_2)\in E(H)$. For example see Figure \ref{fi:crossK3K4}.
\begin{figure}[htp]
\centering
\begin{tikzpicture}
\matrix[nodes={draw, thick, fill=black!20, circle},row sep=0.7cm,column sep=0.7cm] {
  \node(11){}; &
  \node(12){}; &
  \node(13){}; &
  \node(14){}; \\
  \node(21){}; &
  \node(22){}; &
  \node(23){}; &
  \node(24){}; \\
  \node(31){}; &
  \node(32){}; &
  \node(33){}; &
  \node(34){}; \\
};
\draw (11) to (22) to (33) to (24) to (13) to (32) to (21) to (12) to (23) to (34) to (13) to (22) to (31) to (12) to (33) to (14) to (23) to (32) to (11);
\draw (11) to (23);
\draw (11) to (24);
\draw (11) to[bend left] (33);
\draw (11) to [bend left](34);
\draw (22) to (34);
\draw (12) to (24);
\draw (12) to [bend left](34);
\draw (13) to (21);
\draw (13) to [bend left](31);
\draw (14) to (21);
\draw (14) to (22);
\draw (14) to (31);
\draw (14) to [bend left](32);
\draw (21) to (33);
\draw (21) to (34);
\draw (23) to (31);
\draw (24) to (31);
\draw (24) to (32);
\end{tikzpicture}
\caption{The $K_3 \times K_4$.}\label{fi:crossK3K4}
\end{figure}

\begin{thm} \label{Gamma xkellt GxH>=Gamma xkt G,Gamma xellt H}
If $G$ and $H$ are graphs satisfying $\delta (G)\geq k\geq 1$ and $\delta
(H)\geq \ell \geq 1$, then
\[
\Gamma _{\times k\ell ,t}(G\times H)\geq \Gamma _{\times k,t}(G)\cdot\Gamma_{\times \ell ,t}(H).
\]
\end{thm}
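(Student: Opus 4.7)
The plan is to exhibit an explicit minimal $k\ell$-tuple total dominating set of $G \times H$ whose cardinality realizes the desired product. Take any $\Gamma_{\times k,t}$-set $D_G$ of $G$ and any $\Gamma_{\times \ell,t}$-set $D_H$ of $H$, and set
\[
D := D_G \times D_H \subseteq V(G) \times V(H).
\]
Clearly $|D| = \Gamma_{\times k,t}(G)\cdot \Gamma_{\times \ell,t}(H)$, so once we verify that $D$ is a minimal $k\ell$TDS of $G \times H$, the bound follows.

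The first step uses the basic identity $N_{G \times H}((g,h)) = N_G(g) \times N_H(h)$. For any vertex $(g,h) \in V(G \times H)$,
\[
\bigl|N_{G \times H}((g,h)) \cap D\bigr| = |N_G(g) \cap D_G| \cdot |N_H(h) \cap D_H| \geq k\cdot \ell,
\]
because $D_G$ is a $k$TDS of $G$ and $D_H$ is an $\ell$TDS of $H$. Hence $D$ is a $k\ell$TDS of $G \times H$.

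The second step uses Observation \ref{obs}(ii): it suffices to produce, for every $(g,h) \in D$, a $(D,k\ell)$-opn. Since $D_G$ is minimal, pick $g' \in opn_k(g; D_G)$, so $g \in N_G(g')$ and $|N_G(g') \cap D_G| = k$; similarly pick $h' \in opn_\ell(h; D_H)$. Then, using the same neighborhood identity,
\[
N_{G \times H}((g',h')) \cap D = (N_G(g') \cap D_G) \times (N_H(h') \cap D_H),
\]
which has exactly $k\ell$ elements and contains $(g,h)$. Thus $(g',h') \in opn_{k\ell}((g,h); D)$, and by Observation \ref{obs}(ii), $D$ is a minimal $k\ell$TDS of $G \times H$. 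This yields $\Gamma_{\times k\ell,t}(G \times H) \geq |D| = \Gamma_{\times k,t}(G)\cdot \Gamma_{\times \ell,t}(H)$.

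There is essentially no obstacle here: both steps are immediate consequences of the multiplicative structure of neighborhoods in the cross product. The only subtlety is recognizing that the $k$-opn $g'$ (respectively $\ell$-opn $h'$) may lie inside or outside $D_G$ (respectively $D_H$), but the definition of $opn_k(\cdot\,; \cdot)$ accommodates both cases, so no case split is needed. Notice also that the construction does not require $G$ or $H$ to be $\Gamma_{\times \cdot,t}$-external, in contrast with Theorem \ref{Gamma xkt GH>=Gamma xkt G.Gamma xkt H}, because the product of two opns is automatically an external $k\ell$-opn whenever at least one coordinate is external, and even when both are internal the counting argument goes through.
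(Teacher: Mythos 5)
Your proof is correct and follows essentially the same route as the paper: form $D_G\times D_H$, use $N_{G\times H}((g,h))=N_G(g)\times N_H(h)$ to see it is a $k\ell$TDS, and certify minimality by pairing a $(D_G,k)$-opn of $g$ with a $(D_H,\ell)$-opn of $h$ to get a $(D,k\ell)$-opn of $(g,h)$. No further comment is needed.
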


\begin{proof}
Let $D_{G}$ and $D_{H}$ be two $\Gamma _{\times k,t}$-sets of $G$ and $H$, respectively. For a vertex $(u,v)\in V(G\times H)$, let $D_{G,u}=D_{G}\cap N_{G}(u)$ and $D_{H,v}=D_{H}\cap N_{H}(v)$. Since $D_{G}$ is a $k$TDS of $G$ and $D_{H}$ is a $\ell $TDS of $H$, we have $|D_{G,u}|\geq k$ and
$|D_{H,v}|\geq \ell $, and so $|D_{G,u}\times D_{H,v}|\geq k\ell $. Now by knowing
\[
\begin{array}{lll}
D_{G,u}\times D_{H,v} & \subseteq & N_{G}(u)\times N_{H}(v)\\
                                    &=&  N_{G\times H}((u,v)),
\end{array}
\]
we conclude the Cartesian product $D_{G}\times D_{H}$ of $D_{G}$ and $D_{H}$ is a $k\ell $TDS of
$G\times H$. To prove the minimality of $D_{G}\times D_{H}$ let $(a,b)\in
D_{G}\times D_{H}$. Then $a\in D_{G}$\ and $b\in D_{H}$ and the
minimality of $D_{G}$ and $D_{H}$ imply
\[
N_{G}(a')\cap D_{G}=S_{a}~\mbox{ for some vertex }a'\in V(G) \mbox{ and some } k \mbox{-subset } S_{a}\subseteq D_{G},\mbox{ and}
\]
\[
N_{H}(b')\cap D_{H}=S_{b}~\mbox{ for some vertex }b'\in V(H) \mbox{ and some } \ell\mbox{-subset } S_{b}\subseteq D_{H},
\]
and so
\[
N_{G\times H}((a',b'))\cap (D_{G}\times D_{H})=S_{a}\times S_{b}
\]
for the vertex $(a',b')\in V(G\times H)$ and the $k\ell$-subset $S_{a}\times S_{b}$. Hence $D_{G}\times D_{H}$ is a minimal $k\ell $TDS of $G\times H$, and so
\[
\begin{array}{lll}
\Gamma _{\times k\ell ,t}(G\times H) & \geq & | D_{G}\times D_{H}| \\
& = & | D_{G}| \cdot | D_{H}| \\
& = & \Gamma _{\times k,t}(G)\cdot \Gamma _{\times \ell ,t}(H).

\end{array}
\]
\end{proof}

\begin{cor}
\label{Gamma xkellt GxH>=max{Gamma xkt G,Gamma t H,Gamma xellt H.Gamma t G}}
If $G$ and $H$ are graphs satisfying $\delta (G)\geq \delta (H)\geq k\geq 1$%
, then
\[
\Gamma _{\times k,t}(G\times H)\geq \max \{\Gamma _{\times k,t}(G)\cdot \Gamma
_{t}(H),\Gamma _{\times k,t}(H)\cdot \Gamma _{t}(G)\}.
\]
\end{cor}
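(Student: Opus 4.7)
The plan is to deduce the Corollary from Theorem \ref{Gamma xkellt GxH>=Gamma xkt G,Gamma xellt H} by specializing one of the two parameters to $1$, and then invoking the symmetry of the cross product to obtain the second half of the maximum. Throughout, I use the convention, recorded earlier in the paper, that $\Gamma_{\times 1,t}(F) = \Gamma_t(F)$ whenever $\delta(F) \geq 1$.

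For the first bound, the hypothesis $\delta(G) \geq \delta(H) \geq k \geq 1$ guarantees $\delta(G) \geq k$ and $\delta(H) \geq 1$, so Theorem \ref{Gamma xkellt GxH>=Gamma xkt G,Gamma xellt H} applies with parameters $k$ on $G$ and $\ell = 1$ on $H$. Since $k \cdot 1 = k$, this gives directly
\[
\Gamma_{\times k,t}(G \times H) \;\geq\; \Gamma_{\times k,t}(G)\cdot \Gamma_{\times 1,t}(H) \;=\; \Gamma_{\times k,t}(G)\cdot \Gamma_{t}(H).
\]

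For the second bound I use the fact that the cross product is commutative up to isomorphism: the map $(g,h) \mapsto (h,g)$ is a graph isomorphism $G \times H \to H \times G$, so $\Gamma_{\times k,t}(G \times H) = \Gamma_{\times k,t}(H \times G)$. Under the hypothesis we also have $\delta(H) \geq k$ and $\delta(G) \geq 1$, so Theorem \ref{Gamma xkellt GxH>=Gamma xkt G,Gamma xellt H} applies to $H \times G$ with parameters $k$ on $H$ and $\ell = 1$ on $G$, yielding
\[
\Gamma_{\times k,t}(G \times H) \;=\; \Gamma_{\times k,t}(H \times G) \;\geq\; \Gamma_{\times k,t}(H) \cdot \Gamma_{t}(G).
\]

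Taking the larger of the two lower bounds produces the claimed inequality. There is essentially no obstacle: the substantive content has been absorbed into the previous theorem, and the corollary is just a matter of specializing $\ell = 1$ and exploiting the symmetry of $\times$. The only minor point to verify is that the parameter constraints in Theorem \ref{Gamma xkellt GxH>=Gamma xkt G,Gamma xellt H} are satisfied in both applications, which is immediate from $\delta(G) \geq \delta(H) \geq k \geq 1$.
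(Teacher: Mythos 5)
Your proof is correct and is exactly the argument the paper intends: the corollary is stated without proof as an immediate consequence of Theorem \ref{Gamma xkellt GxH>=Gamma xkt G,Gamma xellt H}, obtained by setting $\ell=1$ (using $\Gamma_{\times 1,t}=\Gamma_t$) and applying the theorem a second time with the factors interchanged, which is legitimate since $G\times H\cong H\times G$. The hypothesis checks you perform ($\delta(G)\geq k$, $\delta(H)\geq k\geq 1$) are precisely what is needed for both applications.
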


Next proposition shows that the bound given in Theorem
\ref{Gamma xkellt GxH>=Gamma xkt G,Gamma xellt H} is tight.

\begin{prop} \label{Gamma xkt KnxK2}
For any integers  $1\leq k\leq n-1$, $\Gamma _{\times k,t}(K_{n}\times K_{2})=2k+2.$
\end{prop}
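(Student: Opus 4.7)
The plan is to establish matching lower and upper bounds. For the lower bound, I would invoke Theorem \ref{Gamma xkellt GxH>=Gamma xkt G,Gamma xellt H} with the pair $(k,\ell) = (k,1)$: since the introduction records $\Gamma_{\times k,t}(K_n) = k+1$ whenever $1 \leq k \leq n-1$, and since $\Gamma_{\times 1,t}(K_2) = \Gamma_t(K_2) = 2$, this yields
\[
\Gamma_{\times k,t}(K_n \times K_2) \;\geq\; \Gamma_{\times k,t}(K_n)\cdot \Gamma_{\times 1,t}(K_2) \;=\; 2(k+1) \;=\; 2k+2.
\]

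For the matching upper bound, the first move is to make the structure of $K_n \times K_2$ explicit. Write the vertex set as $A \cup B$ with $A = \{(i,1) : 1 \leq i \leq n\}$ and $B = \{(i,2) : 1 \leq i \leq n\}$; the definition of the cross product shows that $(i,1)$ and $(j,2)$ are adjacent precisely when $i \neq j$, so $K_n \times K_2$ is a bipartite ``crown-like'' graph whose minimum degree is $n-1 \geq k$. Let $S$ be an arbitrary minimal $k$TDS, and set $S_1 = S \cap A$, $S_2 = S \cap B$ with $a = |S_1|$ and $b = |S_2|$. Since the $S$-neighborhood of any $(i,1) \in A$ is exactly $S_2 \setminus \{(i,2)\}$, the $k$TDS condition immediately forces $a \geq k$ and symmetrically $b \geq k$.

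The crux is to upgrade these inequalities to $a, b \leq k+1$. Fix $v = (i,1) \in S_1$ and apply Observation \ref{obs}(ii): there exists an $(S,k)$-opn $v'$ of $v$, which must lie in $N(v) \subseteq B$, say $v' = (j,2)$ with $j \neq i$. A direct computation gives $N(v') \cap S = S_1 \setminus \{(j,1)\}$, so the defining equality $|N(v') \cap S| = k$ forces either $a = k$ (if $(j,1) \notin S_1$) or $a = k+1$ (if $(j,1) \in S_1$). An identical argument applied to any vertex of $S_2$ shows $b \in \{k,k+1\}$, whence $|S| = a+b \leq 2k+2$.

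The only substantive obstacle is this middle step: one must notice that the bipartite structure of $K_n \times K_2$ makes the $k$-private neighbor $v'$ of any $v \in S_1$ read off almost the entire intersection of $S$ with $A$, so that minimality caps $|S_1|$ at $k+1$. Once that observation is in hand, matching with the lower bound from Theorem \ref{Gamma xkellt GxH>=Gamma xkt G,Gamma xellt H} closes the proof.
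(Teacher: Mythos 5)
Your proof is correct and follows essentially the same route as the paper: the lower bound comes from Theorem \ref{Gamma xkellt GxH>=Gamma xkt G,Gamma xellt H} with $\ell=1$ (the paper cites the corresponding corollary), and the upper bound caps each side of the bipartition at $k+1$ via minimality, which you do through Observation \ref{obs}(ii) and $k$-open private neighbors while the paper uses a vertex-removal contradiction --- the same substance either way. (Only a trivial labeling slip: $N((i,1))\cap S=S_{2}\setminus\{(i,2)\}$ forces $b\geq k$ rather than $a\geq k$, but you state both conclusions anyway.)
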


\begin{proof}
For integers $1\leq k\leq n-1$ let $K_{n}\times K_{2}$ be the cross product of $K_n$ and $K_2$ with $V(K_{n}\times K_{2})=V_{1}\cup V_{2}$ in which $V_{i}=\{1,2,...,n\}\times \{i\}$ for $i=1,2$. For a minimal $k$TDS $S$ of $K_{n}\times K_{2}$ with maximum cardinality, let $S_{i}=S\cap V_{i}$ for $i=1,2$, and $| S_{1}| \geq | S_{2}| $. Obviousely $|S_{i}| \geq k$ for each $i$, and the minimality of $S$ implies $| S_{2}| \leq k+1$. Furthermore since $S$ has maximum cardinality, $| S_{2}|
=k+1$. If $| S_{1}|
>k+1$, then for any vertex $v\in
S_{1}-S_{2}^{\prime }$ the set $S-\{v\}$ is a $k$TDS of $K_{n}\times
K_{2}$ in which $S_{2}^{\prime }=\{(a,1)| (a,2)\in S_{2}\}$, a
contradiction. Hence $| S_{1}| =| S_{2}| =k+1$, and so $%
\Gamma _{\times k,t}(K_{n}\times K_{2})\leq 2k+2$. Now equality can be obtained by Corollary \ref{Gamma xkellt GxH>=max{Gamma xkt G,Gamma t H,Gamma
xellt H.Gamma t G}}. Figure \ref{fi:Gamma_{times 2,t}K4K2} shows a minimal $2$TDS of $K_4 \times K_2$ with maximum cardinality.
\end{proof}
\begin{figure}[htp]
\centering
\begin{tikzpicture}
\matrix[nodes={draw, thick, fill=black!10, circle},row sep=0.7cm,column sep=0.7cm] {
\node[fill=black!80](11){}; &
\node[fill=black!80](12){}; &
\node[fill=black!80](13){}; &
\node(14){}; \\
\node[fill=black!80](21){}; &
\node[fill=black!80](22){}; &
\node[fill=black!80](23){}; &
\node(24){}; \\
};
\draw (11) to (22) to (13) to (24) to (12) to (21) to (14) to (23) to (11);
\draw (11) to (24);
\draw (12) to (23);
\draw (13) to (21);
\draw (14) to (22);
\end{tikzpicture}
\caption{The dark vertices highlight a minimal $2$TDS of $K_4 \times K_2$ with maximum cardinality.}\label{fi:Gamma_{times 2,t}K4K2}
\end{figure}
As a natural question we may ask the next question.

\begin{ques}
For any integers  $n,m\geq 2$ such that $\max\{n,m\}\geq k+1$, whether
\[
\Gamma _{\times k,t}(K_{n}\times K_{m})=2k+2?
\]
\end{ques}

Now we present a lower bound for the upper $k$-tuple total
domination number of the cross product of two complete multipartite
graphs.

\begin{prop} \label{Gamma xkt GxH>=4k}
Let $G\times H$ be the cross product of two complete multipartite graphs $G=K_{t_{1},t_{2},...,t_{m}}$ and $H=K_{s_{1},s_{2},...,s_{n}}$ with $\delta (G\times H)\geq k$. If
\[
\sum_{1\leq \ell\leq n}t_{i}s_{\ell}\geq \sum_{1\leq \ell\leq n}t_{j}s_{\ell}\geq 2k ~\mbox{ for some }~1\leq i\neq j\leq m,~ \mbox{ or}
\]
\[
\sum_{1\leq i\leq m}s_{\ell}t_{i}\geq \sum_{1\leq i\leq m}s_{r}t_{i}\geq 2k ~\mbox{ for some }~1\leq \ell \neq r\leq m,
\]
then $\Gamma _{\times k,t}(G\times H)\geq 4k$.
\end{prop}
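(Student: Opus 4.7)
The plan is to exploit the symmetry between the two hypotheses (they are interchanged by swapping the roles of $G$ and $H$), and thus reduce to the first case: $t_i \cdot n(H) \ge 2k$ and $t_j \cdot n(H) \ge 2k$ for some $i \ne j$. Denote the partite sets of $G$ and $H$ by $X_1, \ldots, X_m$ and $Y_1, \ldots, Y_n$, with $|X_i| = t_i$ and $|Y_\ell| = s_\ell$. The hypothesis simply says $|X_i \times V(H)| \ge 2k$ and $|X_j \times V(H)| \ge 2k$, so each of the ``rows'' $X_i \times V(H)$ and $X_j \times V(H)$ has room for $2k$ vertices. My goal is to build a minimal $k$TDS of $G \times H$ of cardinality $4k$ supported in these two rows.

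First I would select subsets $S_i \subseteq X_i \times V(H)$ and $S_j \subseteq X_j \times V(H)$, each of cardinality $2k$, arranged so that $|S_i \cap (X_i \times Y_\ell)| \le k$ and $|S_j \cap (X_j \times Y_\ell)| \le k$ for every $\ell$, and moreover so that at least two distinct values of $\ell$ achieve $|S_i \cap (X_i \times Y_\ell)| = k$ (and likewise two for $S_j$). A natural choice is to place exactly $k$ vertices in each of two columns $X_i \times Y_\ell$ of size at least $k$ -- the raw-size hypothesis guarantees the required total. Set $S = S_i \cup S_j$, so $|S| = 4k$.

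Next I would verify that $S$ is a $k$-tuple total dominating set. For $(x, y) \in X_i \times Y_\ell$, the neighborhood in $G \times H$ is $(V(G) \setminus X_i) \times (V(H) \setminus Y_\ell)$, and its intersection with $S$ equals $S_j \setminus (X_j \times Y_\ell)$, of cardinality $|S_j| - |S_j \cap (X_j \times Y_\ell)| \ge 2k - k = k$. The case $x \in X_j$ is symmetric, and for $x$ in any other partite class of $G$ the analogous count gives at least $2k \ge k$; hence $S$ is a $k$TDS.

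The main obstacle is the last step -- verifying minimality. By Observation \ref{obs} (ii), it suffices to exhibit an ($S,k$)-opn for every vertex of $S$. For $(x, y) \in S_i \cap (X_i \times Y_\ell)$ I would take an ($S,k$)-opn of the form $(x^*, y^*) \in X_j \times Y_{\ell^*}$ with $\ell^* \ne \ell$; then $N((x^*, y^*)) \cap S = S_i \setminus (X_i \times Y_{\ell^*})$, of cardinality $2k - |S_i \cap (X_i \times Y_{\ell^*})|$, which equals $k$ exactly when $|S_i \cap (X_i \times Y_{\ell^*})| = k$. The construction's requirement that at least two columns of $S_i$ have exactly $k$ vertices ensures that, whatever $\ell$ is, at least one of the two equality-columns differs from $\ell$, so such an $\ell^*$ is always available. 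A symmetric argument handles $S_j$. This produces a minimal $k$TDS of cardinality $4k$, so $\Gamma_{\times k, t}(G \times H) \ge 4k$.
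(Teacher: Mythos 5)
Your construction and both verification steps (the $k$TDS count and the minimality check via Observation \ref{obs}(ii)) are internally consistent, but they rest on a claim the hypothesis does not deliver, namely the parenthetical ``the raw-size hypothesis guarantees the required total.'' The hypothesis controls $t_i\sum_{\ell}s_{\ell}=|X_i\times V(H)|$, the size of an entire row, and says nothing about the size $t_is_{\ell}$ of any individual block $X_i\times Y_{\ell}$. Your $S_i$ must place exactly $k$ vertices in each of two blocks of row $X_i$ (and likewise for $S_j$), which requires two indices $\ell$ with $t_is_{\ell}\ge k$; if every block of row $X_i$ has size less than $k$, then no $2k$-subset of that row meets any column in exactly $k$ vertices, and the private neighbor you exhibit (a vertex of $X_j\times Y_{\ell^*}$ seeing precisely the $k$ vertices of $S_i$ outside column $\ell^*$) does not exist. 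A concrete instance: $G=K_{1,1}$ and $H=K_{1,1,\dots,1}$ with $2k$ singleton parts satisfies the hypothesis ($t_1\sum_\ell s_\ell=t_2\sum_\ell s_\ell=2k$ and $\delta(G\times H)=2k-1\ge k$), yet every block $X_i\times Y_\ell$ has size $1$.

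This gap cannot be closed, because the proposition is false under the stated hypothesis: in that instance $G\times H\cong K_{2k}\times K_2$, and Proposition \ref{Gamma xkt KnxK2} gives $\Gamma_{\times k,t}(K_{2k}\times K_2)=2k+2<4k$ for $k\ge 2$. For comparison, the paper's own proof tries to accommodate small blocks by splitting $k=k_1+\cdots+k_r$ over several column pairs subject to $k_i\le\min\{t_1s_i,t_2s_i,t_1s_{i+r},t_2s_{i+r}\}$, but the resulting set fails minimality unless $r=1$: any candidate $k$-open private neighbor of a vertex of $S\cap(X_1\times Y_j)$ lying in row $X_2$ meets $S$ in $2k-k_{j'}$ vertices, which equals $k$ only when some $k_{j'}=k$, so the paper implicitly needs the same block-size condition and breaks on the same example. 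Under the stronger assumption that each of the two chosen rows contains two blocks of size at least $k$, your argument is correct and is, in effect, a cleaner version of what the paper's construction reduces to in the only case where it works.
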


\begin{proof}
Let $G=K_{t_{1},t_{2},\cdots ,t_{m}}$ be a complete $m$-partite graphs which has the partiotion $V(G)=X_{1}\cup X_{2}\cup ...\cup X_{m}$ to the disjoint independent sets $X_{1}$, $X_{2}$, $\cdots$, $X_{m}$ in which $| X_{i}| =t_{i}$ for each $i$. Similarly, let $H=K_{s_{1},s_{2},\cdots ,s_{n}}$ be a complete $n$-partite graphs which has the partiotion $V(H)=Y_{1}\cup Y_{2}\cup ...\cup Y_{n}$ to the disjoint independent sets $Y_{1}$, $Y_{2}$, $\cdots$, $Y_{n}$ in which $|Y_{i}| =s_{i}$ for each $i$. Then $V(G\times H)=\bigcup_{1\leq i\leq m,\mbox{ }1\leq j\leq n} (X_{i}\times Y_{j})$
is the partition of the vertex set of $G\times H$ to the independent sets $X_{i}\times Y_{j}$. Without loss of
generality, we may assume $m\geq n\geq 2$ and
\[
\sum_{1\leq \ell\leq n}t_{1}s_{\ell}\geq \sum_{1\leq \ell\leq
n}t_{2}s_{\ell}\geq 2k.
\]
For $1\leq i\leq r$, let $k_{i}\leq \min \{t_{1}s_{i},t_{2}s_{i},t_{1}s_{i+r},t_{2}s_{i+r}\}$ be a positive integer such that $k=k_{1}+\cdots+k_{r}$. Now we choose a subset $S$ of $V(G\times H)$ such that $| S\cap
(X_{1}\times Y_{i})| =k_{i}$ for each $i$. It can be easily seen that $S$ is a minimal $k$TDS of $G\times
H$, and so $\Gamma _{\times k,t}(G\times H)\geq 4k$.
\end{proof}

We think that the finding some complete multipartite graphs $G$ and $H$ with $\Gamma _{\times k,t}(G\times H)= 4k$ is a good problem to work.

\section{Upper $k$-transversal in hypergraphs}
\vskip0.2 true cm

In this section, we show that the problem of finding upper $k$-tuple
total dominating sets in graphs can be translated to the problem of
finding upper $k$-transversal in hypergraphs. We recall that $H_{G}$ denotes the
open neighborhood hypergraph of a graph $G$.

\begin{thm}\label{Gamma xkt G=Upsilon k H_G}
If $G$ is a graph with $\delta(G)\geq k\geq 1$, then $\Gamma _{\times
k,t}(G)=\Upsilon _{k}(H_{G})$.
\end{thm}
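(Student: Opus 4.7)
The plan is to verify that, under the natural identification $V(G)=V(H_G)$, a subset $S\subseteq V$ is a minimal $k$TDS of $G$ if and only if it is a minimal $k$-transversal of the open neighborhood hypergraph $H_G$. Once this equivalence is established, taking the maximum cardinality on each side immediately yields $\Gamma_{\times k,t}(G)=\Upsilon_k(H_G)$.

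First I would unpack the definitions in parallel. The edge set of $H_G$ is exactly $C=\{N_G(v)\mid v\in V\}$. By definition, $S$ is a $k$-transversal of $H_G$ iff $|S\cap e|\geq k$ for every $e\in C$, i.e., $|S\cap N_G(v)|\geq k$ for every $v\in V$, which (because $\delta(G)\geq k$ guarantees all neighborhoods are nonempty) is precisely the condition that $S$ is a $k$TDS of $G$. So the two collections of (not necessarily minimal) sets coincide as subsets of $2^V$.

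Next I would match the two notions of minimality via Observation~\ref{obs}(ii). A $k$TDS $S$ of $G$ is minimal iff $opn_k(v;S)\neq \emptyset$ for each $v\in S$; that is, for every $v\in S$ there is some $v'\in V$ with $v\in N_G(v')$ and $|N_G(v')\cap S|=k$. Translated into hypergraph language, this says that for every $v\in S$ there is an edge $e=N_G(v')\in C$ with $v\in e$ and $|S\cap e|=k$, which is exactly the condition that $S-\{v\}$ fails to be a $k$-transversal of $H_G$. Hence a $k$TDS $S$ is minimal in $G$ iff the corresponding $k$-transversal $S$ is minimal in $H_G$.

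Combining the two observations, the families of minimal $k$TDS of $G$ and minimal $k$-transversals of $H_G$ are identical as set systems, so their maximum cardinalities agree, giving $\Gamma_{\times k,t}(G)=\Upsilon_k(H_G)$. The argument is essentially a dictionary translation, so there is no real obstacle; the only point deserving a moment of care is the minimality equivalence, where one must see that the existence of a $(S,k)$-opn of $v$ is literally the same as the existence of an edge of $H_G$ containing $v$ that meets $S$ in exactly $k$ vertices.
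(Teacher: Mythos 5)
Your proposal is correct and follows essentially the same route as the paper: identify $k$TDSs of $G$ with $k$-transversals of $H_G$ and conclude the equality of the upper parameters. Your extra verification of the minimality correspondence via Observation~\ref{obs}(ii) is sound but not strictly needed, since once the two set families coincide their minimal members automatically coincide.
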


\begin{proof}
Since every $k$TDS of $G$ contains at least $k$ vertices
from the open neighborhood of each vertex in $G$, we conclude every $k$TDS of $G$ is a
$k$-transversal in $H_{G}$. On the other hand, we know every $k$-transversal
in $H_{G}$ contains at least $k$ vertices from the open neighborhood
of each vertex of $G$, and so is a $k$TDS of $G$. This shows that we have proved that a
vertex subset $S$ is a $k$TDS of $G$ if and only if it is a
$k$-transversal in $H_{G}$, and so $\Gamma _{\times k,t}(G)=\Upsilon _{k}(H_{G})$.
\end{proof}

The authors in \cite{HeKa09} proved the problem of finding $k$-tuple total
dominating sets in graphs can be translated to the problem of finding $k$%
-transversal in hypergraphs, that is, for every integer $k\geq 1$ and every
graph $G$ with minimum degree $k$, $\gamma _{\times k,t}(G)=\tau _{k}(H_{G})$.
This fact and Theorem \ref{Gamma xkt G=Upsilon k H_G} imply the next theorem.

\begin{thm} \label{Gamma xkt G=gamma xkt G iff Upsilon k H_G=tau k H_G}
For any graph $G$ with $\delta(G)\geq k\geq 1$, $$\Gamma _{\times
k,t}(G)=\gamma _{\times k,t}(G) \mbox{ if and only if } \Upsilon
_{k}(H_{G})=\tau _{k}(H_{G}).$$
\end{thm}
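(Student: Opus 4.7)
The plan is to invoke two equalities that together reduce the statement to a triviality, essentially the transitivity of equality. The first ingredient is the preceding Theorem \ref{Gamma xkt G=Upsilon k H_G}, which has just established that $\Gamma_{\times k,t}(G)=\Upsilon_{k}(H_G)$ for every graph $G$ with $\delta(G)\geq k$. The second ingredient is the result of Henning and Kazemi \cite{HeKa09} cited in the paragraph immediately before the theorem: for every such graph $G$, one has $\gamma_{\times k,t}(G)=\tau_{k}(H_G)$. Both equalities hold under exactly the hypothesis $\delta(G)\geq k\geq 1$ that appears in the statement to be proved, so no extra assumption needs to be negotiated.

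With these two identifications in hand, I would simply substitute. Using the display
\[
\Gamma_{\times k,t}(G)=\Upsilon_{k}(H_G)\qquad\text{and}\qquad \gamma_{\times k,t}(G)=\tau_{k}(H_G),
\]
the equation $\Gamma_{\times k,t}(G)=\gamma_{\times k,t}(G)$ is obtained from one side of the biconditional by replacing each domination parameter by its transversal counterpart, and the equation $\Upsilon_{k}(H_G)=\tau_{k}(H_G)$ is obtained from the other side by the reverse replacement. Thus the forward implication and the reverse implication are both immediate: chain the three equalities in whichever order is convenient.

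Since the heavy lifting is already contained in Theorem \ref{Gamma xkt G=Upsilon k H_G} and in the cited result of \cite{HeKa09}, there is no genuine obstacle. The only points to double-check are that the hypothesis $\delta(G)\geq k$ is indeed the common hypothesis of both ingredients (it is), and that the open neighborhood hypergraph $H_G$ is well defined under this hypothesis so that $\tau_{k}(H_G)$ and $\Upsilon_{k}(H_G)$ make sense (the edges of $H_G$ all have size at least $k$, which is precisely what is needed for a $k$-transversal to exist). With these trivial verifications the proof closes in one or two lines.
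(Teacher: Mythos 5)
Your proposal is correct and is exactly the argument the paper uses: it combines Theorem \ref{Gamma xkt G=Upsilon k H_G} with the identity $\gamma_{\times k,t}(G)=\tau_{k}(H_{G})$ from \cite{HeKa09} and concludes by substitution. Nothing further is needed.
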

As we saw before, Proposition \ref{gamma xkt G=k+1 iff ...} characterize graphs $G$ satisfying
$\gamma _{\times k,t}(G)=k+1$. The next theorem characterizes graphs $G$ satisfying
$\gamma _{\times k,t}(G)=m$ for each $m\geq k+2\geq 3$. We note that in the next three theorems, $K'_m$ denotes a simple graph of order $m$ which has minimum degree at least $k$. 

\begin{thm} \label{gamma xkt G=m iff ...}
Let $G$ be a graph with $\delta (G)\geq k\geq 1$, and let $m\geq
k+2$ be an integer. Then
$\gamma _{\times k,t}(G)=m$ if and only if $G=K_{m}^{\prime }$ or $%
G=F\circ _{k}K_{m}^{\prime }$ in which $m$ is minimum in
\[
T=\{t~|~G=F'\circ _{k}K_{t}'\mbox{ for some graphs }F'\mbox{ and }K'_{t}\},
\]
and $F=G-K_{m}'$.
\end{thm}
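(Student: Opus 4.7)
The approach is to set up a one-to-one correspondence between $k$TDSs of $G$ and decompositions $G = F' \circ_k K_t'$ (allowing empty $F'$, in which case $G = K_t'$). Once this correspondence is in place, the set $T$ is exactly $\{|S| : S \text{ is a } k\text{TDS of } G\}$, so $\min T = \gamma_{\times k,t}(G)$, and the theorem reduces to the near-tautology $\gamma_{\times k,t}(G) = m$ iff $m = \min T$.

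To establish the correspondence, first take any $k$TDS $S$ of $G$. Since every vertex of $G$, and in particular every vertex of $S$, has at least $k$ neighbors in $S$, the induced subgraph $G[S]$ has minimum degree at least $k$, so $G[S]$ is a valid $K_{|S|}'$. If $S = V(G)$ then $G = K_{|S|}'$; otherwise, setting $F := G[V(G) - S]$, every vertex of $F$ has at least $k$ neighbors in $V(K_{|S|}')$, so by definition of the $k$-join we obtain $G = F \circ_k K_{|S|}'$ with $F = G - K_m'$. Conversely, given any decomposition $G = F' \circ_k K_t'$, the vertex set $V(K_t')$ is a $k$TDS of size $t$: vertices of $K_t'$ have at least $k$ neighbors in $V(K_t')$ by $\delta(K_t') \geq k$, and vertices of $F'$ have at least $k$ neighbors in $V(K_t')$ by the $k$-join definition.

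Combining these, the forward direction follows by taking a min-$k$TDS $S$ of size $m$: the correspondence yields either $G = K_m'$ (when $S = V(G)$) or $G = F \circ_k K_m'$ (when $S \subsetneq V(G)$), and any $t \in T$ arises from a $k$TDS of size $t$, forcing $t \geq m$. The reverse direction is symmetric: the structural hypothesis supplies a $k$TDS of size $m$, so $\gamma_{\times k,t}(G) \leq m$, while any smaller $k$TDS would produce a $t' \in T$ with $t' < m$, contradicting the minimality of $m$ in $T$. The only point requiring care is the dichotomy between the cases $S = V(G)$ and $S \subsetneq V(G)$, which matches exactly the dichotomy between the two clauses "$G = K_m'$" and "$G = F \circ_k K_m'$" in the statement; there is no serious obstacle here, as the argument is essentially a structural unpacking of the definitions of $k$TDS and $k$-join.
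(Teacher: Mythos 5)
Your proposal is correct and follows essentially the same route as the paper: both arguments observe that for a $k$TDS $S$ the induced subgraph $G[S]$ is a graph $K'_{|S|}$ of minimum degree at least $k$, yielding $G=K'_{|S|}$ or $G=G[V-S]\circ_k K'_{|S|}$, and conversely that $V(K'_t)$ in any such decomposition is a $k$TDS of size $t$, so that the minimum of $T$ equals $\gamma_{\times k,t}(G)$. Your explicit remark that the decomposition with empty $F'$ must be admitted (to cover the case $S=V(G)$) is a small clarification the paper leaves implicit, but it does not change the argument.
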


\begin{proof} 
Let $G$ be a graph with $\delta (G)\geq k\geq 1$, and let $S$ be a min-$k$TDS of $G=(V,E)$ with
cardinality $m\geq k+2$. Then $G[S]=K_{m}'$ for some graph
$K_{m}'$ (because every vertex has at least $k$
neighbors in $S$). If $| V| =m$, then
$G=K_{m}'$. Otherwise, let $F=G[V-S]$. Since every vertex in $V-S$
has at least $k$ neighbors in $S$, we conclude $G=F\circ
_{k}K_{m}^{\prime }$, and by the definition of the $k$-tuple total
domination number, $m$ is minimum in $T$.
\vspace{0.2 cm}

Conversely, let $G=K_{m}'$ or $G=F\circ _{k}K_{m}'$, in which $m$ is
minimum in $T$, and let $F=G-K_{m}'$. Then $\gamma _{\times k,t}(G)\leq m$ because $V(K_{m}')$ is a $k$TDS with cardinality $m$. Now if $\gamma_{\times k,t}(G)=m'$ for some $m'<m$, then, by the previous discussion, $G=F'\circ _{k}K_{m'}'$ for some graph $F'$ and some graph $K_{m'}'$, which contradicts the minimality of $m$. This implies $\gamma_{\times k,t}(G)=m$.
\end{proof}

Proposition \ref {gamma xkt G=k+1 iff ...} and Theorem \ref {gamma xkt G=m iff ...} imply the next theorem. 


\begin{thm} \label{Gamma xkt G=gamma xkt G iff ...}
For any graph  $G$ with $\delta (G)\geq k\geq 1$, $\Gamma
_{\times k,t}(G)=\gamma _{\times k,t}(G)$ if and only if $G=K_{m}'$ or $G=F\circ _{k}K_{m}'$ in which $m$
is minimum in
\[
T=\{t~|~t\geq m+1,~ G=F'\circ _{k}K_{t}'\mbox{ for some graphs }F'\mbox{ and }K'_{t} \},
\]
and $F=G-K_{m}'$.
\end{thm}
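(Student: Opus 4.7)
The plan is to bootstrap this result from the two characterizations already in hand (Proposition~\ref{gamma xkt G=k+1 iff ...} and Theorem~\ref{gamma xkt G=m iff ...}) together with a single structural observation: any minimal $k$TDS $S$ of $G$ forces a $k$-join decomposition of $G$. Indeed, since each vertex of $S$ needs at least $k$ neighbors in $S$, we get $\delta(G[S])\geq k$, so $G[S]$ qualifies as a ``$K_{|S|}'$'' in the theorem's notation; and since each vertex outside $S$ also needs at least $k$ neighbors in $S$, we get $G=G[V-S]\circ_k G[S]$. This converts facts about minimal $k$TDSs of $G$ into facts about $k$-join decompositions, which is exactly the language of the theorem.

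For the forward direction, I would assume $\Gamma_{\times k,t}(G)=\gamma_{\times k,t}(G)=m$. Every minimal $k$TDS then has cardinality exactly $m$. Applying Proposition~\ref{gamma xkt G=k+1 iff ...} if $m=k+1$ or Theorem~\ref{gamma xkt G=m iff ...} if $m\geq k+2$ produces the desired form $G=K_m'$ or $G=F\circ_k K_m'$ together with the minimality of $m$ among the sizes of bottom factors of such decompositions. The additional clause ruling out any $t\geq m+1$ with $G=F'\circ_k K_t'$ follows from the hypothesis: such a decomposition would exhibit a minimal $k$TDS on $V(K_t')$ of cardinality $t>m$, contradicting $\Gamma_{\times k,t}(G)=m$.

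For the reverse direction, the structural hypothesis combined with Theorem~\ref{gamma xkt G=m iff ...} (or Proposition~\ref{gamma xkt G=k+1 iff ...}) immediately gives $\gamma_{\times k,t}(G)=m$, and I then argue $\Gamma_{\times k,t}(G)\leq m$ by contradiction: pick a minimal $k$TDS $S$ of $G$ with $|S|=t>m$; by the structural observation, $G=G[V-S]\circ_k G[S]$ with $t\geq m+1$, contradicting the minimality condition stipulated on $m$. Combining this with $\gamma_{\times k,t}(G)\leq \Gamma_{\times k,t}(G)$ from Observation~\ref{obs}(i) yields the required equality.

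The main obstacle I anticipate is the mismatch between the ``any decomposition $G=F'\circ_k K_t'$'' language in the stated set $T$ and the ``minimal $k$TDS of size $t$'' language that the proof naturally produces: a $k$-join decomposition always exhibits $V(K_t')$ as a $k$TDS, but not necessarily a minimal one. Care is needed so that the forward direction only invokes decompositions whose top part actually induces a minimal $k$TDS; the reverse direction is the easier half because the structural observation supplies a genuine minimal $k$TDS automatically from the assumed decomposition of $G$.
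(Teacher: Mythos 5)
The first thing to say is that the paper itself supplies no proof here: it merely asserts that Proposition~\ref{gamma xkt G=k+1 iff ...} and Theorem~\ref{gamma xkt G=m iff ...} ``imply the next theorem,'' so there is nothing to compare your argument against, and your skeleton --- the bijection between $k$TDSs of $G$ and $k$-join decompositions $G=G[V-S]\circ_k G[S]$, followed by a translation of both numerical conditions --- is the natural way to try to fill that hole. The reverse direction is essentially sound under the only coherent reading of the hypothesis (namely that the set $T$ is empty): a minimal $k$TDS $S$ with $|S|=t>m$ does produce a decomposition $G=G[V-S]\circ_k G[S]$ with $t\geq m+1$, so $\Gamma_{\times k,t}(G)\leq m$ follows.

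The gap is exactly the ``obstacle'' you name in your last paragraph, and it is not a matter of care --- it kills the forward direction as written. You claim that a decomposition $G=F'\circ_k K'_t$ with $t\geq m+1$ ``would exhibit a minimal $k$TDS on $V(K'_t)$ of cardinality $t>m$.'' It exhibits a $k$TDS, not a minimal one, and no amount of rearranging fixes this, because the biconditional itself fails under this reading. Take $G=K_{2k+2}$ (for $k=1$, $G=K_4$). Every minimal $k$TDS of a complete graph has exactly $k+1$ vertices, so $\Gamma_{\times k,t}(G)=\gamma_{\times k,t}(G)=k+1=m$; yet $G=F'\circ_k K'_{k+2}$ with $K'_{k+2}=K_{k+2}$ and $F'=K_k$, so $T$ contains $t=k+2\geq m+1$ and is nonempty. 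Hence ``$\Gamma_{\times k,t}(G)=\gamma_{\times k,t}(G)$'' does not force $T=\emptyset$. (The clause ``$m$ is minimum in $T=\{t\mid t\geq m+1,\dots\}$'' is in any case self-referential and cannot hold literally, since no element of $T$ equals $m$.) A correct statement must restrict $T$ to those $t$ for which $V(K'_t)$ is a \emph{minimal} $k$TDS of $G$ --- equivalently, by Observation~\ref{obs}(ii), for which every vertex of $K'_t$ has a nonempty $k$-open private neighborhood with respect to $V(K'_t)$. With that amendment both of your directions go through verbatim; without it, the forward direction of your proof, and of the theorem, is false. So the missing idea is not an extra lemma but a repair of the condition being proved, and your write-up should not present the flagged mismatch as a technicality to be handled later.
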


Now by Theorems \ref {Gamma xkt G=gamma xkt G iff Upsilon k H_G=tau k H_G} and \ref {Gamma xkt G=gamma xkt G iff ...}, we conclude:
\begin{thm} \label{Upsilon k G=tau k G iff ...}
For any integer $k\geq 1$ and any hypergraph $H$, $\Upsilon
_{k}(H)=\tau _{k}(H) \mbox{ if and only if } H=H_G$, in which $G$ is $K_{m}'$ or $F\circ _{k}K_{m}'$ for some graph $K_{m}'$ and $m$ is minimum in
\[
T=\{t~|~t\geq m+1,~ G=F'\circ _{k}K_{t}'\mbox{ for some graphs }F'\mbox{ and }K'_{t} \},
\]
and $F=G-K_{m}'$.
\end{thm}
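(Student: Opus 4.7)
The plan is to chain the two immediately preceding theorems, since the statement is essentially their composition translated into the language of hypergraphs. The role of Theorem \ref{Gamma xkt G=gamma xkt G iff Upsilon k H_G=tau k H_G} is to translate the hypergraph equality $\Upsilon_k(H)=\tau_k(H)$ into the graph-theoretic equality $\Gamma_{\times k,t}(G)=\gamma_{\times k,t}(G)$, while Theorem \ref{Gamma xkt G=gamma xkt G iff ...} then supplies the structural characterization of the graphs $G$ realizing that equality.

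For the forward direction, I would start by assuming $H=H_G$ for some graph $G$ with $\delta(G)\geq k$ and $\Upsilon_k(H)=\tau_k(H)$. Applying Theorem \ref{Gamma xkt G=gamma xkt G iff Upsilon k H_G=tau k H_G} directly to $G$, we obtain $\Gamma_{\times k,t}(G)=\gamma_{\times k,t}(G)$. Then Theorem \ref{Gamma xkt G=gamma xkt G iff ...} immediately gives that $G$ is either $K'_m$ or $F\circ_k K'_m$, with $m$ minimum in the set $T$ described and $F=G-K'_m$, which is precisely the required conclusion.

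For the converse, assuming $H=H_G$ with $G$ of the stated form, Theorem \ref{Gamma xkt G=gamma xkt G iff ...} yields $\Gamma_{\times k,t}(G)=\gamma_{\times k,t}(G)$, and then Theorem \ref{Gamma xkt G=gamma xkt G iff Upsilon k H_G=tau k H_G} translates this back to $\Upsilon_k(H_G)=\tau_k(H_G)$, i.e., $\Upsilon_k(H)=\tau_k(H)$.

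Since both steps are direct applications of previously established equivalences, there is no real technical obstacle; the proof reduces to a short two-line composition of biconditionals. The only subtlety worth noting is that the statement implicitly restricts attention to hypergraphs $H$ that are open neighborhood hypergraphs of some graph $G$ with $\delta(G)\geq k$, because Theorem \ref{Gamma xkt G=gamma xkt G iff Upsilon k H_G=tau k H_G} is phrased only for such hypergraphs; this restriction is exactly what is encoded by the condition ``$H=H_G$'' in the statement, so no further argument is required.
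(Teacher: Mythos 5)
Your proposal matches the paper exactly: the paper gives no separate argument for this theorem, deriving it in one line as the composition of Theorem \ref{Gamma xkt G=gamma xkt G iff Upsilon k H_G=tau k H_G} with Theorem \ref{Gamma xkt G=gamma xkt G iff ...}, which is precisely the two-step chaining of biconditionals you describe. Your closing remark about the statement implicitly restricting to hypergraphs of the form $H_G$ is a fair reading of what the ``$H=H_G$'' clause must mean for the forward direction to make sense.
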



\end{document}